\documentclass[a4paper,11pt]{article}
\usepackage[utf8x]{inputenc}
\usepackage{amsmath, amsfonts,amssymb,mathtools,amsthm,mathrsfs,bbm}
\usepackage{bbding, pifont, dsfont}
\usepackage{graphicx}
\usepackage{xcolor,a4wide}
\usepackage[rightcaption]{sidecap}
\usepackage{csquotes}
\usepackage{tikz}
\usepackage{hhline}

\definecolor{darkgrey}{rgb}{0.75,0.75,0.75}

\newtheorem{theorem}{Theorem}
\newtheorem{proposition}{Proposition}[section]
\newtheorem{lemma}[proposition]{Lemma}

\newtheorem{definition}[proposition]{Definition}
\theoremstyle{definition}
\newtheorem{remark}[proposition]{Remark}

\newtheorem{assumption}[proposition]{Assumption}

\newcommand\unnumberedfootnote[1]{ %
        \let\temp=\thefootnote %
        \renewcommand{\thefootnote}{}%
        \footnote{#1}%
        \let\thefootnote=\temp%
        \addtocounter{footnote}{-1}}

\makeatletter
\renewcommand{\@fnsymbol}[1]{\ensuremath{%
   \ifcase#1\or 1\or 2\or 3\or
   \mathsection\or \mathparagraph\or \|\or 1\or
   2\or 3 \else\@ctrerr\fi}}
\makeatother

\numberwithin{equation}{section}

\newcommand{\cadlag}{c\`adl\`ag}
\newcommand{\id}{\mathds{1}}
\newcommand{\binary}{\{0,1\}}

\title{A spatial model for selection and cooperation}
\author{Peter Czuppon, Peter Pfaffelhuber} 
\date{\today}

\begin{document}

\maketitle

\begin{abstract}
  \noindent   We study the evolution of cooperation in an interacting
  particle system with two types. The model we investigate is an
  extension of a two-type biased voter model. One type (called
  defector) has a (positive) bias $\alpha$ with respect to the other
  type (called cooperator). However, a cooperator helps a neighbor
  (either defector or cooperator) to reproduce at rate $\gamma$.  We
  prove that the one-dimensional nearest-neighbor interacting
  dynamical system exhibits a phase transition at $\alpha=\gamma$. A special choice of interaction kernels yield that for
  $\alpha>\gamma$ cooperators always die out, but if $\gamma>\alpha$,
  cooperation is the winning strategy.
\end{abstract}

\noindent Keywords: Interacting particle system; voter model;
cooperation; phase transition; extinction; survival; clustering

\noindent AMS 2000 subject classifications: Primary 60K35; Secondary 82C22, 92D15

\section{Introduction}
In nature cooperative behavior amongst individuals is widely spread. It is observed in animals, e.g. \cite{clutton,griffin}, as well as in microorganisms, e.g.
\cite{crespi,levin}. In the attempt to understand this
phenomenon by models, theoretical approaches introduced different
interpretations and forms of cooperation, mostly within the area of
game theory \cite{nowak}. In all such approaches, a defector (or
selfish) type tends to have more offspring, but there are cases when
it is outcompeted by the cooperator type under some circumstances.
Although in all of the models describing cooperation the question of
extinction and survival of a type or the coexistence of several types
are main subjects of the mathematical analysis, the frameworks for the
theoretical studies may vary. While (stochastic) differential
equations are mainly used for non-spatial systems (see for example
\cite{archetti,hutzenthaler}), the theory of interacting particle
systems provides a suitable setup for the analysis of models with
local interactions between the particles,
\cite{blath,lanchier,sturm}. In this paper we define a model using the
latter structure and terminology.

Investigations of models incorporating cooperation are interesting
because of the following dichotomy: in non-spatial (well-mixed)
situations, the whole population benefits from the cooperative
behavior. If defectors have a higher fitness than cooperators,
defectors always outcompete cooperators in the long run. However, if
the system is truly spatial, cooperators can form clusters and then
use their cooperative behavior in order to defend themselves against
defectors, even though those might have higher (individual)
fitness. This heuristics suggests that only structured models can
help to understand cooperative behavior in nature.  For the model
studied in the present paper, we will make it precise in Proposition
\ref{prop:unstructured} for extinction of cooperators in a non-spatial
system and in Theorem~\ref{thm:ext_surv} for extinction of defectors
in a spatial system, if cooperation is strong enough.

Due to the variety of interpretations of cooperative behavior there
are different ways of implementing these mechanisms in a spatial
context. In the field of population dynamics, Sturm and Swart
\cite{sturm} study an interacting particle system containing a
cooperative-branching mechanism which can be understood as a sexual
reproduction event. In \cite{blath}, Blath and Kurt study a
branching-annihilating random walk and again, a cooperation mechanism
is interpreted as sexual reproduction. In contrast, the model
introduced by Evilsizor and Lanchier in \cite{lanchier} originates
from the game-theoretical study of a two player game with different
strategies where the strategies can be altruistic or selfish. Here,
the altruistic strategies represent the cooperator type. We discuss
the findings of these models to our results in
Section~\ref{sec:comparison}.

Various interacting particle systems which appear in the literature
are attractive, i.e.\ two versions of the system, which start in
configurations where one dominates the other, can be coupled such that
this property holds for all times; see e.g.\ \cite{sturm} for an
attractive model mentioned above. For such processes, there exist
several general results (cf. \cite{liggett}) which provide some useful
techniques helping in the analysis.  However, cooperation often leads
to non-attractive interacting particle systems; see
\cite{blath,lanchier} and the one presented here. The reason here is
that cooperators (or altruists) do not distinguish between
non-cooperators and their own type which usually contradicts
attractiveness.

The motivation for the present paper came from studies of bacterial
cells in the context of public-good-dilemmas, e.g. \cite{brockhurst,
  drescher}. The idea is that there are two types (defector=0,
cooperator=1), where only cooperators produce some public good which
helps neighboring cells to reproduce. However, this production is
costly which means that defectors will have a selective advantage over
the cooperator type. The resulting model is a biased voter model with
an additional cooperation mechanism. The main objective of our paper
is to study the long-time behavior of such a model dependent on the
parameters of the system. 

In particular, we prove for our main model in one dimension from
Definition~\ref{def:model}.3, that the system clusters independently
of the parameter configuration. When starting in a translation
invariant configuration, for $\alpha>\gamma$, defectors take over the
population, whereas for $\gamma>\alpha$ cooperators win; see
Theorem~\ref{thm:ext_surv}. Additionally, in higher dimensions, at
least we can show that the parameter region where defecting particles
win is larger than for $d=1$; see Theorem~\ref{thm:ext_surv2}. We also
show that a finite number of cooperators dies out if $\alpha>\gamma$,
but may survive if $\gamma>\alpha$. The converse holds true for 
defectors; see Theorem~\ref{thm:complete_conv}.  What remains to be
seen is if there are parameter combinations such that cooperators win
also in higher dimensions. Some preliminary results in the limit of
small parameters $\alpha$ and $\gamma$ or very large $\gamma$ can be found in
\cite{dissertation}.

The paper is structured as follows. First, we give a general
definition of the model in Section \ref{sec:model}. After the
definition we derive some properties of the model, show its existence
and consider some special cases and related systems. In Section
\ref{sec:result} we state limit results for the main model and its
derivatives, mainly restricted to the one-dimensional
lattice. Subsequently, in Section \ref{sec:comparison}, we compare our
results with those obtained in similar models, e.g.\ from \cite{blath}
and \cite{lanchier}. The rest of the paper is devoted to the proofs of
the theorems.

\section{The model and first results}\label{sec:model}
\subsection{The model}
Let $V$ be a countable vertex set, and $(a(u,v))_{u,v\in V}$ be a (not
necessarily symmetric) Markov kernel from $V$ to $V$. Additionally,
$(b(u,(v,w))_{u\in V, (v,w)\in V\times V}$ is a second Markov kernel
from $V$ to $V\times V$. We study an interacting particle system
$X=((X_t(u))_{u\in V})_{t\geq 0}$ with state space $\{0,1\}^V$, where
$X_t(u)\in\{0,1\}$ is the type at site $u$ at time $t$. A particle in
state~0 is called {\it defector} and a particle in state~1 is
called {\it cooperator}. The dynamics of the interacting particle
system, which is a Markov process, is (informally) as follows: For
some $\alpha,\gamma\geq 0$:
\begin{itemize}
\item \textit{Reproduction:} A particle at site $u\in V$ reproduces
  with rate $a(u,v)$ to site $v$, i.e.\ $X(v)$ changes to
  $X(u)$. (This mechanism is well-known from the voter model.)
\item \textit{Selection:} If $X(u)=0$ (i.e.\ there is a {\it defector}
  at site $u\in V$), it reproduces with additional rate $\alpha\
  a(u,v)$ to site $v$, i.e.\ $X(v)$ changes to $0$.  (A defector has a
  fitness advantage over the cooperators by this additional chance to
  reproduce. This mechanism is well-known from the biased
  voter model.)
\item \textit{Cooperation:} If $X(u)=1$ (i.e.\ there is a {\it
    cooperator} at site $u\in V$), the individual at site $v$ (no
  matter which state it has) reproduces to site $w$ at rate $\gamma\
  b(u,(v,w))\geq 0$.  (A cooperator at site $u$ helps an individual at
  site $v$ to reproduce to site $w$.)
\end{itemize}

\begin{remark}[Interpretation]
  \begin{enumerate}
  \item {\it Selection:} Since cooperation imposes an energetic cost
    on cooperators, the non-cooperating individuals can use these
    free resources for reproduction processes. This leads to a fitness
    advantage which we describe with the parameter $\alpha$.
  \item {\it Cooperation:} The idea of the cooperation mechanism in
    our model is that each cooperator supports a neighboring
    individual, independent of its type, to reproduce to another
    site according to the Markov kernel $b$. A biological
    interpretation for this supportive interaction is a common good
    produced by cooperators and released to the environment helping
    the colony to expand. The corresponding interaction parameter
    is~$\gamma$.\\
    Below, we will deal with two situations, depending on whether $b(u,
    (v,u))>0$ or $b(u,(v,u))=0$. In the former case, we speak of an
    altruistic system, since a cooperator at site $u$ can help the
    particle at site $v$ to kill it. In the latter case, we speak
    of a cooperative system.
  \end{enumerate}
\end{remark}

\noindent
In order to uniquely define a Markov process, we will need the
following assumption.

\begin{assumption}[Markov kernels\label{ass:kernel}]
  The Markov kernels $a(.,.)$ and $b(.,(.,.))$ satisfy
  \begin{align}\label{eq:ex_cond_1}
    \sum_{u\in V} a(u,v) <\infty \text{ for all } v\in V
  \intertext{and}
  \label{eq:ex_cond_2}
    \sum_{u,v\in V} b(u,(v,w)) <\infty \text{ for all } w\in V.
  \end{align}
\end{assumption}

\begin{remark}[Some special cases]
  A special case is
  \begin{align}
    \label{eq:100}
    b(u,(v,w)) = a(u,v)\cdot a(v,w) \quad \text{for all } u,v,w\in V.
  \end{align}
  Then, \eqref{eq:ex_cond_2} is implied by the assumption
  \begin{align*}
    \sup_{v\in V}\sum_{u\in V} a(u,v) <\infty,
  \end{align*}
  which is stronger than \eqref{eq:ex_cond_1}.  We will also deal with
  a similar case setting $b(u,(v,u))=0$ which means that $u$ cannot
  help $v$ to replace $u$. To be more precise, we set
  \begin{equation}
    \label{eq:102}
    b(u,(v,w)) = \left\{ \begin{array}{ll}  a(u,v)\cdot \frac{a(v,w) \id_{\{w\neq u\}}}{\sum_{w'\neq u}a(v,w')}, 
                           & \text{if } a(v,u)<1, \\ 0, & \text{else,}\end{array}\right. \quad\text{for all } u,v,w\in V.
   \end{equation}
   The normalizing sum in the denominator emerges from the exclusion
   of self-replacement, i.e.\ \eqref{eq:102} is the two-step
   transition kernel of a self-avoiding random walk.
\end{remark}

\subsection{Existence and uniqueness of the process}
\noindent
We now become more formal and define the (pre-)generator of the
process $X$ via its transition rates. Given $X\in\binary^V$, the rate
of change $c(u,X)$ from $X$ to
\[ X^u(v) = \begin{cases} X(v), & v\in V\backslash \{u\}; \\
  1-X(u), & v=u;\end{cases} \]
is as follows: \\
If $X(u)=0$, then
\begin{align} 
  \label{eq:104}
  c(u,X)& =\sum_{v} a(v,u) X(v)+\gamma \sum_{v}
  X(v) \sum_{w} X(w) b(w,(v,u)).  \intertext{If
    $X(u)=1$, then} \label{eq:105} c(u,X) & = \left(1+\alpha\right)\sum_{v}
  a(v,u)(1-X(v))+\gamma \sum_{v}
  (1-X(v)) \sum_{w}
  X(w) b(w,(v,u)).
\end{align}
Here, the first sum in $c(u,X)$ represents the rates triggered by
reproduction and selection whereas the last terms emerge from the
cooperation mechanism.

The existence of a unique Markov process corresponding to the
transition rates $c(u,X)$ satisfying Assumption~\ref{ass:kernel} is guaranteed by standard theory, see for
example \cite[Chapter 1]{liggett}. Precisely, we define the
\textit{(pre-)generator} $\Omega$ of the process through
\begin{equation*}\label{eq:generator}
  (\Omega f)(X)=\sum_{u\in G} c(u,X)(f(X^u)-f(X)),
\end{equation*}
where $f\in \mathcal{D}(\Omega)$, the domain of $\Omega$, is given by
\begin{equation*}
  \mathcal{D}(\Omega):=\{f:\binary^V \rightarrow \mathbb{R} \text{ depends only on finitely many coordinates}\}.
\end{equation*}
We note that $\mathcal{D}(\Omega)$ is dense in $C_b(\binary^V)$, the set of
bounded continuous functions on $\binary^V$, because of the
Stone-Weierstrass-Theorem. We find the following general statement.

\begin{proposition}[Existence of unique Markov
  process\label{prop:existence}]
  If Assumption~\ref{ass:kernel} holds, the transition rates $c(.,.)$
  given in \eqref{eq:104} and \eqref{eq:105} define a unique Markov
  process $X$ on $\binary^V$. Moreover, the closure $\bar\Omega$ of
  $\Omega$ is the generator of $X$.
\end{proposition}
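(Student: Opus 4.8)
Observe first that $X$ is a \emph{spin system}: each elementary transition in the description of the dynamics --- reproduction, selection, or cooperation --- changes the configuration at a single site, namely the ``target'' site that carries the label $u$ in the rates \eqref{eq:104}--\eqref{eq:105}. The plan is to verify the hypotheses of the general construction theorem for spin systems in \cite[Chapter~I]{liggett}; concretely, this amounts to building $X$ from a graphical (Harris-type) representation and then identifying its generator with $\bar\Omega$.

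Assumption~\ref{ass:kernel} supplies exactly the two ingredients the construction needs. Fix $u\in V$. Reading off \eqref{eq:104}--\eqref{eq:105}, the total rate of flips at $u$ is bounded, uniformly in the configuration, by
\begin{equation*}
  \sup_{X} c(u,X)\ \le\ (1+\alpha)\sum_{v} a(v,u)\ +\ \gamma\sum_{v,w} b(w,(v,u))\ =:\ \lambda_u\ <\ \infty ,
\end{equation*}
which is finite by \eqref{eq:ex_cond_1} and \eqref{eq:ex_cond_2}. Moreover $c(u,\cdot)$ depends on a single coordinate $v$ only in a controlled way: flipping $X(v)$ alters the reproduction/selection part of the rate by at most $(1+\alpha)a(v,u)$ and the cooperation part by at most $\gamma\big(\sum_{w} b(w,(v,u))+\sum_{w} b(v,(w,u))\big)$, since $v$ enters the cooperation term both as the reproducing site and as the helping site; summing over $v$ and using $\sum_{v,w}b(v,(w,u))=\sum_{v,w}b(w,(v,u))$ yields $\sum_v c_{u,v}\le(1+\alpha)\sum_v a(v,u)+2\gamma\sum_{v,w}b(w,(v,u))<\infty$, where $c_{u,v}:=\sup\{|c(u,X)-c(u,X')|:X,X'\ \text{agree off}\ v\}$.

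Given this, I would attach independent Poisson processes of ``arrows'' to the dynamics --- rate-$a(v,u)$ and rate-$\alpha\,a(v,u)$ arrows for each ordered pair $(v,u)$, and rate-$\gamma\,b(w,(v,u))$ arrows for each triple $(w,(v,u))$ --- and define $X_t(u)$ by tracing backwards from the space--time point $(u,t)$: along the last arrow pointing into $u$ before time $t$, then along the last arrow into its source(s), and so on, until the time-$0$ slice is reached; the initial configuration on the finitely many sites so discovered then determines $X_t(u)$. Each site is the head of only finitely many arrows during $[0,t]$ (a Poisson$(\lambda_u t)$ number), so the backward exploration is locally finite. Granting that it terminates almost surely, $X=(X_t)_{t\ge 0}$ is a well-defined \cadlag\ Markov process on $\binary^V$; applying $\Omega$ to $f\in\mathcal D(\Omega)$ and using the bounds above shows that $f(X_t)-f(X_0)-\int_0^t(\Omega f)(X_s)\,ds$ is a martingale, and the density of $\mathcal D(\Omega)$ in $C_b(\binary^V)$ together with uniqueness of the graphical construction gives that $\mathcal D(\Omega)$ is a core and that $\bar\Omega$ generates $X$. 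Uniqueness of the Markov process with rates $c(\cdot,\cdot)$ follows, since any such process must coincide with the one produced by the arrows.

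The one genuinely substantial point is the no-explosion claim used above: the backward exploration must reach time $0$ after finitely many steps, almost surely. Assumption~\ref{ass:kernel} bounds the in-rate of every \emph{individual} site but not uniformly over $V$, so the backward cluster is not dominated by a single subcritical branching process in a one-line way; one has to exploit the summability of $v\mapsto a(v,u)$ and of $(v,w)\mapsto b(w,(v,u))$ \emph{along} the exploration, comparing it with a first-passage percolation or an age-dependent branching process whose expected progeny up to time $t$ is finite. An alternative that avoids this is to approximate $X$ by the processes $X^{(n)}$ obtained by freezing all sites outside a finite $V_n\uparrow V$: each $X^{(n)}$ is an honest bounded-rate Markov chain, all of them can be run on the \emph{same} family of arrows (the non-attractiveness of the dynamics rules out a monotone coupling, but the common graphical representation suffices), and one checks that $X^{(n)}_t(u)$ stabilises as $n\to\infty$ for every $u$ and $t$, the limit solving the martingale problem for $\Omega$. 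Either way, one is in the framework of \cite[Chapter~I]{liggett}, which delivers all assertions of the proposition.
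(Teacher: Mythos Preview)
Your two estimates --- the uniform-in-configuration bound $\sup_X c(u,X)\le(1+\alpha)\sum_v a(v,u)+\gamma\sum_{v,w}b(w,(v,u))$ and the dependence bound $\sum_v c_{u,v}\le(1+\alpha)\sum_v a(v,u)+2\gamma\sum_{v,w}b(w,(v,u))$ --- are exactly the ones the paper derives. The divergence is in what you do with them. The paper observes that these are precisely the hypotheses of \cite[Theorem~I.3.9]{liggett} (the Hille--Yosida-type existence theorem for spin systems), cites that result, and is finished: closability of $\Omega$, existence and uniqueness of the semigroup, and identification of the generator all come packaged. No graphical construction, no backward exploration, no explosion analysis.

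Your route is more constructive and would also work, but it is genuinely longer, and you have correctly located the sore point: without a \emph{uniform} bound $\sup_u\lambda_u<\infty$, the backward cluster is not dominated by a single Galton--Watson process, and termination of the exploration needs an honest argument (your sketched first-passage-percolation comparison or finite-volume approximation are both viable, but neither is a one-liner). The abstract theorem in \cite{liggett} sidesteps all of this. So the practical upshot is that once you have written down those two sums and invoked Assumption~\ref{ass:kernel}, you should stop and cite \cite[Theorem~I.3.9]{liggett} rather than build the process by hand; the graphical representation is useful elsewhere in the paper (for couplings), but it is not the efficient path to Proposition~\ref{prop:existence}.
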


\begin{proof}
  We need to show that the closure of $\Omega$ in $C(\binary^V)$ is a
  generator of a semi-group which then uniquely defines a Markov
  process (see for example \cite[Theorem 1.1.5]{liggett}). In order to
  show this we follow \cite[Theorem 1.3.9]{liggett} and check the
  following two conditions:
  \begin{align}
    \label{eq:200}
    \sup_{u\in V} \sup_{X\in\binary^V} c(u,X) & < \infty, \\
    \label{eq:201}
    \sup_{u\in V} \sum_{v\neq u} \widetilde{c}_u(v)& < \infty,
  \end{align}
  where 
  $$\widetilde{c}_u(v):= \sup\{ \|
  c(u,X_1)-c(u,X_2)\|_T : X_1(w)=X_2(w) \text{ for all } w\neq v\}$$
  measures the dependence of the transition rate $c(u,X)$ of the site
  $v\in V$ and $\|\cdot\|_T$ denotes the total variation norm.
  \par
  Both inequalities follow from Assumption~\ref{ass:kernel} and the
  definition of the transition rates $c(.,.)$. Using these we obtain
  for any $X\in\binary^V$ and $u\in V$
  \begin{align*}
    c(u,X)\leq (1+\alpha) \sum_{v\in V} a(v,u) + \gamma \sum_{v,w\in
      V} b(w,(v,u)) < \infty
  \end{align*}
  showing \eqref{eq:200}.  For \eqref{eq:201}, we note that
  $\widetilde{c}_u(v)\neq 0$ only when either $a(v,u)>0$ or $b(w,(v,u))>0$
  or $b(v,(w,u))>0$ for some $w\in V$. Hence, for all $u\in V$ we
  obtain
  \begin{align*}
    \sum_{v\neq u} \widetilde{c}_u(v) & \leq \sum_{v\neq u}\left(
      (1+\alpha)a(v,u)+\gamma \sum_{w\in V} b(w,(v,u))+b(v,(w,u))
    \right)\\
    & \leq \sum_{v\in V} (1+\alpha)a(v,u) + 2\gamma \sum_{v,w\in V}
    b(v,(w,u)) <\infty,
  \end{align*}
  where we used the inequalities \eqref{eq:ex_cond_1} and
  \eqref{eq:ex_cond_2} again and we have proved~\eqref{eq:201}.

  Now, using \cite[Theorem 1.3.9]{liggett} we see that the closure of
  $\Omega$ in $C(\binary^V)$ is a Markov generator of a Markov
  semigroup. This finishes the proof.
\end{proof}

\noindent
We can now define the voter model with bias and cooperation.

\begin{definition}[(Cooperative/Altruistic) Voter Model with Bias and
  Cooperation]\label{def:model}
  Let $a(.,.)$ be a Markov kernel from $V$ to $V$ satisfying
  \eqref{eq:ex_cond_1} and $b(.,(.,.))$ be a Markov kernel from $V$ to
  $V\times V$ satisfying \eqref{eq:ex_cond_2}.
  \begin{enumerate}
  \item The (unique) Markov process with transition rates given by
    \eqref{eq:104} and \eqref{eq:105} is called the \emph{Voter Model
      with Bias and Cooperation} (VMBC).
  \item If \eqref{eq:100}  holds, the VMBC is called the
    \emph{altruistic Voter Model with Bias and Cooperation} (aVMBC).  
  \item If \eqref{eq:102} holds, the VMBC is called the
    \emph{cooperative Voter Model with Bias and Cooperation} (cVMBC).
  \end{enumerate}
\end{definition}

\subsection{Unstructured populations}
\noindent
As a first result, we show that the probability for cooperators to die
out on a large, complete graph tends to one (for $\alpha>0$). We
consider the special case of an {\it unstructured
  population}. Therefore, let $V^N$ be the vertex set of a graph with
$|V^N|=N$ and
$$ a^N(u,v) = \frac{1}{N-1} $$
for $u,v\in V^N$ with $u\neq v$. Due to the global neighborhood it is
equally likely to find configurations of the form "101" and
"110". Hence, cooperation events favoring a defector or a cooperator
happen with the same rate and thus cancel out when looking at the mean
field behavior of the system. We will show that defectors always take
over the system for large $N$. It can easily be seen that the aVMBC is
dominated by the cVMBC, so it suffices to show extinction of
cooperators for the cVMBC, i.e.\ we have
$$ b^N(u,(v,w)) = \frac{\id_{\{u\neq v\}}}{N-1} \frac{\id_{\{v\neq
    w\}}\id_{\{w\neq u\}}}{(N-1) \frac{N-2}{N-1}} =
\frac{1}{(N-1)(N-2)} \id_{\{u,v,w \text{ different\}}}.$$
We prove that in the limit for large $N$ the frequency of cooperators
follows a logistic equation with negative drift, hence cooperators die
out. See also \cite[Chapter 11]{ethier}.

\begin{proposition}[Convergence in the unstructured case]\label{prop:unstructured}
  Let $X^N$ be a cVMBC on $V^N$ and $S^N := \tfrac 1N \sum_u X^N(u)$ the
  frequency of cooperators. Then, if $S^N_0 \xRightarrow{N\to\infty} s_0$,
  then
  $$ S^N \xRightarrow{N\to\infty} S,$$
  where $S$ solves the ODE
  \begin{equation*}
  	 dS = -\alpha S(1-S)
  \end{equation*}
  with $S_0=s_0$, independently of $\gamma$.
\end{proposition}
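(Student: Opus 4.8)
The plan is to identify $S^N$ as a density-dependent Markov chain in the sense of Kurtz (see \cite[Chapter 11]{ethier}) and then apply the standard fluid-limit theorem. First I would compute the transition rates of $S^N$ as a birth–death-type process on $\{0, 1/N, 2/N, \dots, 1\}$. Write $k$ for the number of cooperators, so $S^N = k/N$. A cooperator is created at site $u$ (currently a defector) at rate $\sum_v a^N(v,u) X^N(v) + \gamma \sum_v X^N(v)\sum_w X^N(w) b^N(w,(v,u))$; summing over the $N-k$ defector sites $u$ and using $a^N(v,u) = 1/(N-1)$ and the explicit $b^N$, the voter part contributes $(N-k)\cdot \tfrac{k}{N-1}$ and the cooperation part contributes a term of order $\gamma \cdot \tfrac{k}{N}(1 - \tfrac{k}{N}) \cdot (\text{something bounded})$ — crucially, by the symmetry noted in the text (configurations "101" and "110" are equally likely under the complete-graph kernel), the net cooperation contribution to the \emph{drift} of $S^N$ vanishes to leading order. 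Similarly, a cooperator at $u$ is killed at rate $(1+\alpha)\sum_v a^N(v,u)(1-X^N(v)) + \gamma(\ldots)$; summing over the $k$ cooperator sites gives voter-plus-selection part $k \cdot (1+\alpha)\tfrac{N-k}{N-1}$ and again a cooperation part whose drift contribution cancels against the creation side.

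Second, I would assemble the generator of $S^N$ applied to test functions and extract the scaled drift $b_N(s) := \mathbb{E}[\,N(S^N_{t+dt} - S^N_t)\mid S^N_t = s\,]/dt$. From the rate computation, the voter terms $(N-k)\tfrac{k}{N-1}$ (up) and $(1+\alpha)k\tfrac{N-k}{N-1}$ (down) combine: the unbiased voter parts cancel in the drift (as they must — the voter model is a martingale), leaving only the selection term, so $b_N(s) \to -\alpha s(1-s)$ as $N \to \infty$, uniformly on $[0,1]$, with the $\gamma$-terms contributing $O(1/N)$ to the drift by the aforementioned cancellation. The noise term is $O(1/N)$: the total jump rate is $O(N)$ and each jump moves $S^N$ by $1/N$, so the quadratic variation of the martingale part is $O(1/N) \to 0$.

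Third, with $b_N \to b$ uniformly where $b(s) = -\alpha s(1-s)$ is Lipschitz on $[0,1]$, and with vanishing noise, Kurtz's theorem (\cite[Theorem 11.2.1]{ethier}, or the martingale-problem/Gronwall argument directly) gives $S^N \xRightarrow{N\to\infty} S$ where $dS/dt = -\alpha S(1-S)$, $S_0 = s_0$; uniqueness of the ODE solution (Lipschitz right-hand side on the invariant interval $[0,1]$) ensures the limit is deterministic and well-defined. Convergence in distribution on path space (Skorokhod topology on $D_{[0,1]}[0,\infty)$) follows from tightness, which is immediate since $S^N$ has jumps of size $1/N$ and uniformly bounded drift.

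The main obstacle is the bookkeeping in the second step: one must carefully expand the cooperation rates $\gamma \sum_v X(v)\sum_w X(w) b^N(w,(v,u))$ over all defector sites $u$ (for births) and all cooperator sites $u$ (for deaths), check that the leading-order ($O(N)$ in rate, hence $O(1)$ in drift) contributions are of the form $\pm \gamma \cdot (\text{same function of } s)$ and therefore cancel between births and deaths, and confirm the remainder is genuinely $O(1/N)$ uniformly in the configuration — i.e.\ that the "101 vs 110" symmetry is exact at leading order and the asymmetry is only a finite-$N$ boundary effect. Everything after that is a routine invocation of the density-dependent-chain limit theorem.
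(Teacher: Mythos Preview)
Your plan is correct and follows essentially the same route as the paper: recognize that $S^N$ is itself a Markov process on $\{0,1/N,\dots,1\}$, compute its generator (equivalently, its jump rates as a birth--death chain), observe that the $\gamma$-contributions to up- and down-jumps coincide and hence cancel in the drift, and then invoke a standard Ethier--Kurtz convergence theorem (the paper uses Theorem~4.8.2, you propose Theorem~11.2.1; both work). One small sharpening: the cooperation contributions to the up- and down-rates are not merely equal to leading order but \emph{exactly} equal for every finite $N$ --- both equal $\gamma k(k-1)(N-k)/[(N-1)(N-2)]$ --- so no $O(1/N)$ remainder needs to be controlled there.
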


\begin{proof}
  In order to prove the limiting behavior for $N\rightarrow\infty$, we
  observe that $S^N$ is a Markov process. A calculation of the
  generator $\Omega^N$ applied to some smooth function~$f$ yields
  \begin{align*}
    \Omega^N f(s) & = N s \frac{1-s}{1-\tfrac 1N}(f(s + \tfrac 1N) -
    f(s)) + (1+\alpha) N(1-s) \frac{s}{1-\tfrac 1N}(f(s - \tfrac 1N) -
    f(s)) \\ & \qquad \qquad + \gamma Ns \frac{s - \tfrac 1N}{1-\tfrac
      1N}\frac{1-s}{1-\tfrac 2N}(f(s + \tfrac 1N) - f(s)) \\ & \qquad
    \qquad \qquad \qquad + \gamma Ns\frac{1-s}{1-\tfrac
      1N}\frac{s-\tfrac 1N}{1-\tfrac 2N}(f(s - \tfrac 1N) - f(s)) \\ &
    \xrightarrow{N\to\infty} -\alpha s(1-s) f'(s).
  \end{align*}
  Applying standard weak convergence results, see for example \cite[Theorem 4.8.2]{ethier}, this shows the claimed convergence.
\end{proof}

\section{Results on the long-time behavior for $V=\mathbb Z^d$}
\label{sec:result}
Our main goal is to derive the long-time behavior of the VMBC with
$V=\mathbb Z^d$. In spin-flip systems, results on the ergodic
behavior can be obtained by general principles if the process is {\it
  attractive}. Thereby, a spin-system is called attractive if for two
configurations $X,Y\in \binary^V$ with $X\leq Y$ componentwise, the
corresponding transition rates $c$ satisfy the following two relations
for all $u\in V$
\begin{align}\label{eq:monotonicity1}
  X(u)=Y(u)=0 & \Rightarrow c(u,X)\leq c(u,Y),\\
  X(u)=Y(u)=1 & \Rightarrow c(u,X)\geq
  c(u,Y).\label{eq:monotonicity2}
\end{align}
However, the VMBC is not attractive for $\gamma>0$. Indeed, consider the
simple case when $V=\{u,v,w\}$ with Markov kernels
$$ a(u,v) = a(v,w) = a(w,u)=1$$ and $b(u, (v,w)) = a(u,v)a(v,w)$. 
Then, let $X=(001)$ and $Y=(101)$ (i.e.\ $X(u)= 0, Y(u)=1,
X(v)=Y(v)=0, X(w)=Y(w)=1$) and note that $X\leq Y$, but
\begin{align*}
  c(w, X) & = 1+\alpha < 1+\alpha + \gamma = c(w,Y).
\end{align*}
This shows that \eqref{eq:monotonicity2} is not satisfied at $w\in
V$.
Hence, proofs for the long-time behavior require other strategies
which do not rely on attractiveness of the process.

\noindent
Before we state our main results we define what we mean by extinction
and clustering.

\begin{definition}[Extinction, clustering]
  \begin{enumerate}
  \item We say that in the VMBC-process $(X_t)_{t\geq 0}$ type $i
    \in\binary$ \emph{dies out} if
    \begin{align*}
      P\left(\lim_{t\rightarrow\infty} X_t(u) = 1-i\right)=1, \quad \text{for all } u\in V.
    \end{align*} 
  \item We say that the VMBC-process \emph{clusters} if for all
    $u,v\in V$
    \begin{align*}
      \lim_{t\rightarrow\infty} P(X_t(u) = X_t(v))=1.   	 
    \end{align*} 
  \end{enumerate}
\end{definition}

\noindent
We will use $V=\mathbb Z^d$ and nearest neighbor interaction via the
kernels $a$ and $b$. In this case we have that for all
$u,v,w\in \mathbb Z^d$ with $|u-v| = |w-v|=1$
\begin{align}
  a(u,v) & = \frac{1}{2d}, \qquad  b(u,(v,w)) = \frac{1}{(2d)^2} \label{eq:avmbc}
           \intertext{for the aVMBC and}
           a(u,v) & = \frac{1}{2d}, \qquad  b(u,(v,w)) = \frac{1}{2d(2d-1)} \id_{\{u\neq w\}}
\end{align}
for the cVMBC. We say that (the distribution of) a
$\binary^{\mathbb{Z}^d}$-valued random configuration $X$ is
\emph{non-trivial} if $P(X(u)=0\text{ for all $u$}), P(X(u)=1\text{
  for all $u$}) <1$.
Furthermore, we call $X$ \emph{translation invariant} if
$(X(u_1),...,X(u_n)) \stackrel d =(X(u_1+v),...,X(u_n+v))$ for all
$n\in\mathbb{N}, u_1,...,u_n,v\in \mathbb Z^d$. If the VMBC model is
started in a translation invariant configuration
$X_0\in\binary^{\mathbb Z^d}$, the configuration $X_t$ is translation
invariant due to the homogeneous model dynamics.

Now we can state our main results. 
For cVMBC, we
distinguish between the case $\alpha>\gamma$ where we can state a
convergence result in all dimensions $d\geq 1$, the case
$\gamma>\alpha$ and the case $\gamma=\alpha$. In the last two cases,
the method of proof is only applicable in dimension $d=1$.

\begin{theorem}[cVMBC-limits]\label{thm:ext_surv}
  Let $V=\mathbb Z^d$ and $a(.,.)$ be the nearest neighbor random walk
  kernel and $X$ be the cVMBC with $\alpha,\gamma \geq 0$
  starting in some non-trivial translation invariant configuration.
  \begin{enumerate}
  \item[(i)] If $d\geq 1$ and $\alpha>\gamma$, the cooperators die
    out.
  \item[(ii)] If $d=1$ and $\gamma>\alpha$, the defectors die out.
  \item[(iii)] If $d=1$ and $\gamma=\alpha$, the process clusters.
  \end{enumerate}
\end{theorem}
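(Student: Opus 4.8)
The plan is to study the one-dimensional cVMBC through its dual process. Since the cVMBC is not attractive, I would not try to use monotone coupling with the biased voter model directly on the configuration level. Instead, the key observation is that in $d=1$ with nearest-neighbor kernels, the boundaries between blocks of $0$'s and blocks of $1$'s perform a system of interacting random walks, and the whole analysis reduces to tracking these interfaces. For the basic voter model in $d=1$ the interfaces are annihilating random walks; the bias $\alpha$ biases the walks so that $1$-$0$ interfaces and $0$-$1$ interfaces drift toward each other or apart; and the cooperation term $\gamma$ contributes reproduction events that, by symmetry of configurations ``$101$'' versus ``$110$'', act on the two kinds of interface in mirror-image fashion. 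I would set up the interface process carefully, compute the generator of a single tagged pair of neighboring interfaces, and read off that the net drift of the gap between a $1$-block's two boundaries is governed by the sign of $\gamma-\alpha$: for $\alpha>\gamma$ every $1$-block shrinks on average, for $\gamma>\alpha$ it grows, and for $\alpha=\gamma$ the interfaces are unbiased (recurrent) random walks.

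For part~(i), $\alpha>\gamma$ in general dimension $d\geq1$, I would not use the interface picture (which is one-dimensional) but rather a direct comparison argument: build a coupling of the cVMBC with a biased voter model in which $0$'s have bias $\alpha-\gamma>0$, using the fact that each cooperation event has a ``partner'' event of the opposite effect that occurs at equal rate, so that on average the cooperation mechanism is neutral and is dominated by the selective advantage $\alpha$ of the defectors. Concretely, I expect to track the expected density $\mathbb{E}[X_t(u)]$ (or, via duality, the survival of a dual coalescing-branching particle system) and show it is bounded above by the corresponding quantity for a process where $0$ has a strict advantage, which is known to push the cooperator density to $0$. This part should be the easiest of the three.

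For parts~(ii) and~(iii) in $d=1$, once the interface random walk description is in place, I would argue as follows. In case~(iii), $\alpha=\gamma$: the interfaces are symmetric (unbiased) random walks that annihilate/coalesce on meeting; by recurrence of one-dimensional random walks the number of interfaces in any finite window tends to zero, which is exactly clustering. This is essentially the classical voter-model clustering argument transported to the interface process, and the cooperation terms must be checked to cancel in the drift (they contribute equally to both interface types). In case~(ii), $\gamma>\alpha$: each $1$-block has a net outward drift $\gamma-\alpha>0$ at each of its two boundaries, so a $0$-block squeezed between two $1$-blocks is a random walk with negative drift on its length and hence gets absorbed at $0$ in finite time; combined with translation invariance and non-triviality (so that $1$'s are present somewhere with positive density), a block-argument or a second-moment/comparison estimate shows every site is eventually $1$ almost surely, i.e.\ defectors die out.

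The main obstacle I anticipate is making the interface process rigorous and handling the interactions between interfaces correctly: when a $1$-block shrinks to width zero its two interfaces annihilate, when two interfaces of opposite orientation meet new dynamics appear, and the cooperation kernel $b$ for the cVMBC involves the self-avoiding two-step kernel~\eqref{eq:102}, so the rates at which an interface moves depend on the local configuration in a way that must be computed exactly and shown to reduce to a clean biased-random-walk generator with bias proportional to $\gamma-\alpha$. Controlling the system of infinitely many such interfaces — in particular passing from ``each block shrinks in expectation'' to ``every site converges almost surely'' — will likely require a comparison with a simpler solvable particle system (biased annihilating random walks) plus a duality or sub/super-martingale argument, and that comparison step is where the non-attractiveness of the original model makes things delicate.
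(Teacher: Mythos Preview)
Your high-level picture --- interfaces in $d=1$, comparison with a biased voter model for (i) --- is sound, but there are two places where your plan diverges from the paper and one where it has a real gap.

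For (i), the paper does exactly what you initially shy away from: it couples the cVMBC with a biased voter model at the configuration level (Lemma~\ref{l:dominance}). Non-attractiveness only prevents comparing two copies of the cVMBC; comparing one cVMBC to one biased voter model is a straightforward rate inequality, since the cooperation term in the $0\to1$ rate is bounded above by $\gamma\sum_v a(v,u)X(v)$ and in the $1\to0$ rate is bounded below by $0$. Your ``partner events cancel on average'' heuristic is unnecessary and would be harder to make precise than this two-line rate comparison.

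For (ii), your description of the interface drift as a clean $\gamma-\alpha$ per boundary is only correct when the neighbouring $0$-block has size at least~$2$ and you are looking at case~A ($00\,1\cdots1\,00$). The paper distinguishes three environments A, B, C for the flanks of a cooperator cluster; in cases B and C the shrink rate picks up an extra $\gamma$ (a cooperator across a singleton $0$ helps that $0$ kill your boundary), and the growth can be a jump of size $\geq2$ by merger. The resulting clustersize process is not a biased simple random walk but a jump process with environment-dependent, time-varying rates $\lambda_1(t),\lambda_2(t),\mu(t)$. The paper then proves a general supermartingale lemma (Proposition~\ref{P:supermartingal}) showing that whenever $\lambda_1+2\lambda_2-\mu>\varepsilon$ such a process escapes to infinity with positive probability, verifies this inequality in all three cases, and finishes with a Borel--Cantelli argument over successive clusters using translation invariance. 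Your sketch does not account for these environment corrections, and ``each $0$-block has negative drift, hence is absorbed'' would fail as stated once you allow singleton $0$'s or singleton neighbouring $1$-blocks.

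For (iii), your annihilating-random-walk-on-interfaces argument is a legitimate alternative; the paper explicitly mentions it as one of two possible proofs. The paper instead computes $\frac{\partial}{\partial t}p_t(1)$ and $\frac{\partial}{\partial t}p_t(11)$ directly and shows $p_t(10),p_t(01)\to0$ by monotonicity, which is shorter and avoids setting up the interface process at all.
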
 

\noindent
The proof of Theorem~\ref{thm:ext_surv} can be found in
Section~\ref{sec:proof1}. Briefly, for $\alpha>\gamma$, we will use a
comparison argument with a biased voter model, see Definition
\ref{def:biasedvoter}. For $\gamma>\alpha$ and $d=1$, however, we
prove the convergence result with the help of a clustersize-process
which takes the special form of a one-dimensional jump process. As we
will see, for $\gamma>\alpha$, a cluster of cooperators has a positive
probability to survive and expand to infinity which will then yield
the result. Unfortunately, due to the simple description of such a
cluster in one dimension, this argument cannot be extended to higher
dimensions. However, resorting to some simulation results for $d=2$
and $d=3$, we see a similar behavior (with a different threshold) like
in $d=1$, see Figure~\ref{fig:simulation}. For higher dimensions,
spatial correlations between sites are weaker reducing the impact of
clusters on the evolution of the system. This in turn leads to a
reduced chance of survival of cooperators.

\begin{figure}
  \begin{center}
    \includegraphics[width=0.4\textwidth]{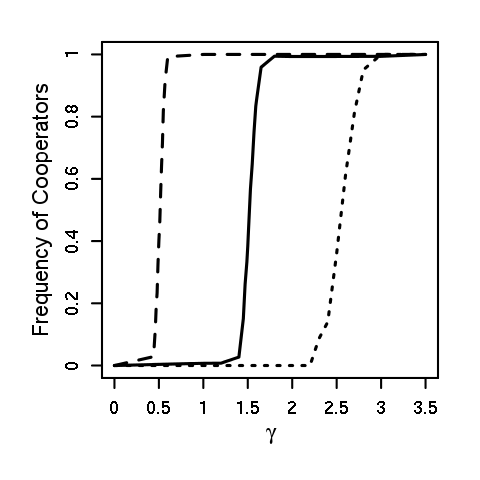}
    \caption{Relative frequencies of cooperators after $100,000$
      transitions of the cVMBC on a $1,000$ sites torus in one
      dimension (dashed line), a $40\times 40$ sites torus in two
      dimensions (solid line) and a $12\times 12\times 12$ sites torus
      in three dimensions (dotted line). The initial configuration was
      a Bernoulli-product measure with probability $0.5$ and the
      selection rate $\alpha$ was set to $0.5$. We suspect that the slightly
      smaller slope in three dimensions is a finite-number effect.}
    \label{fig:simulation}
  \end{center}			
\end{figure} 

\noindent
For the aVMBC, we can only state a threshold when cooperators die out.

\begin{theorem}[aVMBC-limits]\label{thm:ext_surv2}
  Let $V=\mathbb Z^d$ and $a(.,.)$ be the nearest neighbor random walk
  kernel and $X$ be the aVMBC with $\alpha,\gamma \geq 0$ starting in
  some non-trivial translation invariant configuration. 
  \begin{enumerate}
  \item[(i)] If $d\geq 1$ and $\alpha>\gamma\frac{d-1}{d},$ the cooperators
    die out. In particular, for $d=1$, the cooperators die out if
    $\alpha>0$ independently of $\gamma$.
  \item[(ii)] If $d=1$, the process equals the cVMBC with parameters $\alpha
    + \gamma/2$ and $\gamma/2$ in distribution. In particular, if
    $\gamma>\alpha=0$, the process clusters.
  \end{enumerate}
\end{theorem}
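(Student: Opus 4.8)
\medskip
\noindent\emph{Proof idea.}
The plan is to reduce both parts to Theorem~\ref{thm:ext_surv} via one structural identity: for the nearest-neighbour kernels of \eqref{eq:avmbc}, the aVMBC with parameters $(\alpha,\gamma)$ should have the same law as the cVMBC with parameters $\bigl(\alpha+\tfrac{\gamma}{2d},\,\tfrac{2d-1}{2d}\gamma\bigr)$. Granting this, part (ii) is just the case $d=1$, where the parameters become $(\alpha+\gamma/2,\gamma/2)$; for $\gamma>\alpha=0$ these are equal and positive, so Theorem~\ref{thm:ext_surv}(iii) gives clustering. And part (i) follows because $\alpha+\tfrac{\gamma}{2d}>\tfrac{2d-1}{2d}\gamma$ is equivalent to $\alpha>\tfrac{d-1}{d}\gamma$, so Theorem~\ref{thm:ext_surv}(i) applied to the associated cVMBC yields extinction of cooperators; for $d=1$ this condition is simply $\alpha>0$. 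Non-triviality and translation invariance of the initial configuration are preserved, since only the dynamics are reinterpreted, and the new parameters are nonnegative.

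To establish the identity I would, by Proposition~\ref{prop:existence}, only need to match the flip rates $c(u,X)$ of the two processes, and the key idea is to split the aVMBC cooperation mechanism according to whether the target site $w$ equals the helping cooperator's site $u$. With $b(u,(v,w))=a(u,v)a(v,w)$ as in \eqref{eq:100}, the transitions with $w=u$ are genuinely present for the nearest-neighbour walk because $a(u,v)a(v,u)=(2d)^{-2}>0$ whenever $v\sim u$. First I would decompose the aVMBC generator as the sum of: (a) the biased voter part (the $\gamma=0$ terms of \eqref{eq:104}--\eqref{eq:105}); (b) the cooperation transitions with $w=u$; and (c) the cooperation transitions with $w\neq u$. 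This is a finite reshuffling of nonnegative rates, so no integrability issue arises.

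Next I would check that (b) is \emph{exactly} a selection term. A cooperation event with $w=u$ replaces $X(u)$ by $X(v)$ for a neighbour $v$ of $u$ and requires $X(u)=1$; hence it flips $u$ from $1$ to $0$ at total rate $\tfrac{\gamma}{(2d)^2}\,\#\{v\sim u:X(v)=0\}$ and is inert at defector sites --- which is precisely the additional rate $\alpha\sum_v a(v,u)(1-X(v))$ contributed by selection in \eqref{eq:105} beyond the plain voter rate, taken with $\alpha=\tfrac{\gamma}{2d}$. Thus (a)$+$(b) is the biased voter part with bias $\alpha+\tfrac{\gamma}{2d}$. Then (c) ranges over exactly the triples $(u,v,w)$ with $v\sim u$, $w\sim v$, $w\neq u$, each carrying rate $\tfrac{\gamma}{(2d)^2}$; comparing with the cVMBC kernel \eqref{eq:102}, which for the nearest-neighbour walk reduces to $b(u,(v,w))=\tfrac{1}{2d(2d-1)}\,\id_{\{w\neq u\}}$ on the same set of triples (the denominator $\sum_{w'\neq u}a(v,w')=\tfrac{2d-1}{2d}$ being the source of this factor), one sees that (c) at aVMBC-rate $\gamma$ equals the cVMBC cooperation generator at rate $\tfrac{2d-1}{2d}\gamma$. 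Recombining (a), (b), (c) gives the identity of generators, hence of laws.

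The argument is short and the obstacle is not conceptual but bookkeeping. The delicate points are: retaining the self-avoiding normalisation $\sum_{w'\neq u}a(v,w')=\tfrac{2d-1}{2d}$ of \eqref{eq:102}, which is exactly what converts the cooperation rate $\gamma$ into $\tfrac{2d-1}{2d}\gamma$; and counting each $w=u$ event once only, namely as part of the selection term and not again as cooperation. A slip in either place shifts the threshold $\alpha=\tfrac{d-1}{d}\gamma$, so the elementary computation is precisely where care is needed.
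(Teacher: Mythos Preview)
Your argument is correct, and in fact slightly sharper than the paper's. The generator computation you outline does establish the identity
\[
\text{aVMBC}(\alpha,\gamma)\;\stackrel{d}{=}\;\text{cVMBC}\Bigl(\alpha+\tfrac{\gamma}{2d},\;\tfrac{2d-1}{2d}\gamma\Bigr)
\]
for every $d\geq 1$: splitting the cooperation sum in \eqref{eq:104}--\eqref{eq:105} according to whether the cooperator's site coincides with the flipping site yields exactly the extra selection term $\tfrac{\gamma}{2d}\sum_v a(v,u)(1-X(v))$ when $X(u)=1$ (and nothing when $X(u)=0$), while the remaining $w\neq u$ part matches the cVMBC cooperation kernel after absorbing the normalisation $\sum_{w'\neq u}a(v,w')=(2d-1)/(2d)$ into the cooperation parameter. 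Parts (i) and (ii) then follow from Theorem~\ref{thm:ext_surv}(i) and~(iii) as you say.

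The paper takes a different route for part~(i): rather than proving this identity in all dimensions, it proves a one-sided comparison (Lemma~\ref{l:44}) showing that the aVMBC is stochastically dominated by a biased voter model with biases $\gamma(2d-1)/(2d)$ and $\alpha+\gamma/(2d)$, and then invokes the biased voter result directly. The computation in that lemma is essentially the same decomposition you carry out, but the remaining $w\neq u$ cooperation term is bounded above (using $\sum_{w\neq u}a(v,w)X(w)\leq (2d-1)/(2d)$) rather than kept exactly. The paper only records the exact identity for $d=1$, which is part~(ii). Your approach therefore unifies the two parts and shows that Theorem~\ref{thm:ext_surv2} is a straight corollary of Theorem~\ref{thm:ext_surv}; the paper's separate domination lemma becomes unnecessary. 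Either way the extinction conclusion ultimately rests on the biased voter comparison (via Lemma~\ref{l:dominance} inside the proof of Theorem~\ref{thm:ext_surv}(i)), so the gain is conceptual rather than a strengthening of the conclusion.
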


The proof of the Theorem can be found in
Section~\ref{sec:proof2}. Again, for $\alpha>\gamma(d-1)/d$, we can
use a comparison argument with the biased voter model. However, it
remains an open question whether cooperators in the aVMBC have a
positive probability of survival in any dimension. On the one hand,
the difference between the aVMBC and the cVMBC becomes smaller in high
dimensions suggesting survival of cooperators for large $\gamma$. On the other hand, clustering is usually more difficult in
higher dimensions but cooperators can only survive due to
clustering. First simulation results for $d=2$ and $d=3$ show that
survival of cooperators is unlikely in the aVMBC.

\begin{remark}[Cooperation only among cooperators]
  Another cooperation mechanism we might consider arises if
  cooperators only help other cooperators, i.e. the cells recognize
  related cells. In ecological literature this behavior is called
  kin-recognition or kin-discrimination, see \cite{penn} for an overview. As to the theoretical
  behavior of the model this changes the transition rate in
  \eqref{eq:105}, i.e. if $X(u)=1$ then
  \begin{equation*}
    \begin{aligned}
      c(u,X) & = \left(1+\alpha\right) \sum_{v} a(v,u)(1-X(v)).
    \end{aligned}     
  \end{equation*}
  Here, cooperators are less likely to die and hence, this process
  dominates the cVMBC. In particular, for translation invariant
  initial conditions, defectors die out for $\gamma>\alpha$ in one
  dimension. Moreover, as can be seen from a calculation similar as in
  the proof of Lemma~\ref{l:dominance}, a biased voter model, where
  type~0 is favored, still dominates this process for
  $\alpha>\gamma$. Hence, we also have that cooperators die out in
  this case and the same results as in Theorem~\ref{thm:ext_surv}
  hold.
\end{remark}

Since cooperators always die out in $d=1$ for the aVMBC (as long as
$\alpha>0$), we focus on the cVMBC in the sequel. We state some
results if the starting configuration is not translation invariant,
but contains only a finite number of cooperators or defectors.

\begin{theorem}[Finite initial
  configurations]\label{thm:complete_conv}
  Let $V=\mathbb{Z}$ and $a(.,.)$ be the nearest neighbor random walk
  kernel and $X$ be the cVMBC with $\alpha,\gamma\geq 0$. Let $X_0$
  contain either finitely many defectors or finitely many cooperators
  (i.e.\ $X_0 = \id_A$ or $X_0=1-\id_A$ for some finite $A\subseteq V$).
  \begin{enumerate}
  \item[(i)] The process clusters. 
  \item[(ii)] If $\alpha\geq\gamma$ and $X_0$ contains finitely many
    cooperators, the cooperators die out.
  \item[(iii)] If $\gamma\geq\alpha$ and $X_0$ contains finitely many
    defectors, the defectors die out.
  \end{enumerate}
\end{theorem}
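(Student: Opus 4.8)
The plan is to exploit duality together with the results already available for translation-invariant starting configurations. For part~(i), recall that the VMBC is a spin-flip system whose dual is built from coalescing random walks (arising from the voter/biased-voter part) together with branching events (arising from the cooperation part $\gamma$). When $V=\mathbb Z$ and $a$ is the nearest-neighbor kernel, the crucial observation is that clustering is equivalent to the statement that two dual particles started at $u$ and $v$ eventually coalesce (or that the relevant dual function stabilizes). Since the underlying spatial motion is recurrent in $d=1$, any two "boundary" walks between a $0$-region and a $1$-region meet in finite time almost surely; the extra branching can only create more opportunities for the configuration to become locally constant, not fewer. Concretely, I would track the interfaces (the set of edges $\{x,x+1\}$ with $X_t(x)\neq X_t(x+1)$): with finitely many defectors or cooperators there are initially only finitely many interfaces, each interface performs a nearest-neighbor-type motion, and interfaces annihilate when they meet. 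The number of interfaces is thus a nonnegative integer-valued process that can only decrease or stay constant at "pure" branching times and decreases at meeting times; one shows it hits a constant value, and that value must be $0$ by a recurrence argument, which is exactly clustering. The main obstacle here is that the cooperation mechanism can, in principle, create a new pair of interfaces in the interior (a $\gamma$-event turning a locally monochromatic stretch non-monochromatic is impossible, but a $\gamma$-event acting near an existing interface can spawn an adjacent interface pair); I would argue that such creations come in annihilating pairs and are dominated, via the same clustersize jump-process description used for Theorem~\ref{thm:ext_surv}(iii), by a system in which the total interface count is a supermartingale-like quantity that is eventually absorbed at a finite level.

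For part~(ii), suppose $X_0=\id_A$ with $A$ finite, so there are finitely many cooperators, and assume $\alpha\ge\gamma$. Here I would use the comparison with the biased voter model from Lemma~\ref{l:dominance} (the same comparison used for Theorem~\ref{thm:ext_surv}(i)): for $\alpha\ge\gamma$, the cVMBC is dominated by a biased voter model in which type~$0$ is favored. A biased voter model started from finitely many $1$'s on $\mathbb Z$ has the favored type~$0$ taking over almost surely — the finite set of cooperators is a "disadvantaged droplet" which shrinks; more precisely, the dual contact-type process for the disfavored type started from finitely many sites dies out when the bias is in favor of~$0$. Combining the domination with this classical fact gives that the cooperators die out. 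The only care needed is that Lemma~\ref{l:dominance}'s coupling is stated for translation-invariant or general configurations and applies verbatim to the finite configuration $\id_A$; since the coupling is constructed graphically site-by-site it does, so this step is essentially a citation of the lemma plus the known behavior of the one-dimensional biased voter model (e.g.\ via \cite{liggett}).

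Part~(iii) is the symmetric statement: $X_0=1-\id_A$ with $A$ finite (finitely many defectors) and $\gamma\ge\alpha$, and we must show the defectors die out. By part~(i) the process clusters, so $\lim_t X_t(u)$ exists in $\{0,1\}$ and is the same for all $u$ (a single random variable $\xi\in\{0,1\}$); it remains to rule out $\xi=0$. The natural route is again the clustersize process of Theorem~\ref{thm:ext_surv}(ii): with finitely many defectors, the $0$-region is a finite union of finite intervals, and each such interval's length is a one-dimensional jump process whose drift, for $\gamma>\alpha$, pushes it toward $0$ (cooperators expand into defector clusters at net positive rate). One shows each finite defector cluster is eventually absorbed, and because there are only finitely many of them and clustering forbids regeneration of an infinite defector region, the defectors die out; in the boundary case $\gamma=\alpha$ one uses that the clustersize process is (null-)recurrent so a finite defector cluster still hits $0$ a.s. I expect the genuine difficulty to be the bookkeeping when several defector clusters interact — two clusters can merge before either is absorbed — but this only helps, since a merged cluster is still finite and still feels the same inward drift; the rigorous version is to bound the total number of defectors above by a one-dimensional birth–death-type chain with nonpositive drift and apply the recurrence/transience dichotomy. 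Thus the heart of the theorem is part~(i) (clustering despite non-attractiveness), and parts~(ii)–(iii) follow by coupling to the biased voter model and to the clustersize process respectively.
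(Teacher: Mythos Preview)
Your route for (ii) via Lemma~\ref{l:dominance} is legitimate and arguably cleaner than what the paper does: the coupling applies to arbitrary initial conditions, and a biased voter model on $\mathbb Z$ with finitely many $1$'s and bias toward type~$0$ (or no bias, in the boundary case $\alpha=\gamma$) does lose its $1$'s almost surely by a standard random-walk argument on the interval length.

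Parts (i) and (iii), however, have real gaps. For (i), there is no useful duality for the cVMBC---it is non-attractive and has no known coalescing/branching dual---so that opening sentence cannot be cashed in. Your fallback interface argument is closer to the truth, but the ``obstacle'' you identify is illusory: in the nearest-neighbor cVMBC on $\mathbb Z$, \emph{no} transition can create interfaces, because a site can flip only if at least one neighbor carries the opposite type, and flipping such a site never increases the interface count (it either moves one interface or annihilates two). The missing step is the opposite one: you must show the count actually drops all the way to $0$ or to $2$ (a single surviving finite cluster), and in the latter case that this cluster eventually covers every fixed site; ``absorbed at a finite level'' is not yet clustering. For (iii), the proposed bound of the total defector count by a single birth--death chain with nonpositive drift is not justified: the total count changes at a rate proportional to the number of $01$-boundaries, with direction depending on local configurations (size-$1$ clusters behave differently), so it is neither Markov on its own nor obviously dominated by such a chain.

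The paper's argument is unified and sidesteps all of this. Let $N_t$ be the number of \emph{finite} clusters (of either type). Since only boundary sites can flip, clusters cannot split, so $N_t$ is non-increasing and hence converges. To see that the limit is $0$ or $1$: as long as $N_t\ge 3$, pick a cluster of the weaker type (cooperators if $\alpha\ge\gamma$, defectors if $\gamma\ge\alpha$); while all clusters have size $\ge 2$ its size is a nearest-neighbor walk with nonpositive drift, so it reaches $1$ in finite time, whereupon $N_t$ drops with positive probability at the next transition. If $N_t\to 1$, the surviving cluster must be of the stronger type (the same walk argument kills a lone weak cluster) and its size is then a positively biased walk diverging to $\infty$, which yields clustering; this simultaneously forces $N_t\to 0$ in cases (ii) and (iii). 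Thus the single bookkeeping device---the non-increasing finite-cluster count---replaces both your duality/interface heuristic for (i) and the multi-cluster difficulty you flagged in (iii).
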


\begin{remark}[Starting with a single particle]
  A particularly simple initial condition is given if $|A|=1$. In case
  there is initially only a single cooperator, we note that the size of the
  cluster of cooperators $(C_t)_{t\geq 0}$ is a birth-death process
  which jumps from $C$ to
  \begin{align*}
    & C+1 \text{ at rate } \id_{\{C>0\}} + \gamma \cdot \id_{\{C\geq 2\}}, \\ &
    C-1 \text{ at rate } (1+\alpha)\cdot \id_{\{C>0\}}.
  \end{align*}
  Conversely, if there is only a single defector, the size of the
  cluster of defectors $(D_t)_{t\geq 0}$ is a birth-death process
  which jumps from $D$ to
  \begin{align*}
    & D+1 \text{ at rate } (1+\alpha) \cdot \id_{\{D>0\}} + \gamma\cdot \id_{\{D=1\}}, \\ & D-1 \text{
      at rate } (1+\gamma)\cdot \id_{\{D>0\}}.
  \end{align*}
  Hence, either cooperators or defectors die out, depending on whether
  $(C_t)_{t\geq 0}$ (or $(D_t)_{t\geq 0}$) hits~0 or not.
\end{remark}

The proof of Theorem~\ref{thm:complete_conv} is given in Section
\ref{sec:proof3}. Note that the only situations where the process
does not converge to a deterministic configuration in this setting are
the cases where $\gamma>\alpha$ ($\alpha>\gamma$) and the process
starts with finitely many cooperators (defectors). Here, the limit
distribution is a linear combination of the invariant measures
$\delta_{\underline{0}}$ and $\delta_{\underline{1}}$, the all-zero and all-one configuration, respectively. This basically
means that we observe clustering, which is statement (i) above.

\section{Comparison to results from  \cite{blath} and \cite{lanchier}}
\label{sec:comparison}
In this section we compare our results on the cVMBC to those obtained by Blath and Kurt in \cite{blath} and the system introduced by Evilsizor and Lanchier in \cite{lanchier}. We choose these two models since both have mechanisms favoring one type, while  a second type is only favored if it occurs in a cluster.

\subsection*{Comparison to \cite{blath}}
One model studied is the cooperative caring double-branching annihilating random walk (ccDBARW) on the integer lattice $\mathbb{Z}$. Particles migrate to neighboring sites with rate $m$ and annihilate when meeting another particle. (Note that this mechanism favors the unoccupied state.) The double-branching events happen with rate $1-m$. Here, the authors restrict branching to particles with an occupied neighboring site and such particles branch to the next unoccupied site to the left and to the right. (That is, if a cluster of size $\geq 2$ already exists, the branching mechanism extends the cluster.) Their result about this process, starting in a finite configuration (see Theorem 2.4 in the paper), states that for $m<1/2$, particles survive with positive probability, whereas for $m>2/3$, particles die out almost surely. 

Although Blath and Kurt only discuss the case of a finite initial configuration, the results are in line with our findings: If the mechanism to favor enlargement of existing clusters (cooperation in our case and cooperative branching in their case) is too weak, type~0 (or the unoccupied state) wins. Importantly, in both models, enlargement of existing clusters can be strong enough in order to outcompete the beneficial (or unoccupied) type.

\subsection*{Comparison to \cite{lanchier}}
The model studied in \cite{lanchier} -- called the death-birth updating process -- emerges from a game theoretic model with two strategies. This means that transition rates are derived from a $2\times 2$ payoff-matrix with entries $a_{ij}$ for $i,j\in\{1,2\}$ representing the payoff obtained by a particle of type $i$ due to interacting with a particle of type $j$. Now, a particle dies with rate $1$ and is replaced by a particle in its neighborhood proportionally to its fitness which is determined by the values of the payoff-matrix. The neighborhood is given by blocks of radius $R$. The authors analyze this model in different settings. They call a strategy $i$ selfish if $a_{ii} > a_{ji}$ for $j\neq i$ (i.e.\ the payoff having strategy~$i$ as opponent is bigger if one has the same strategy~$i$) and altruistic if $a_{ii} < a_{ji}$. Again, in a non-spatial version of this game, selfish strategies always outcompete altruistic strategies. Noting that selfish strategies seem to be fitter, altruistic strategies might become favorable if they form a big cluster because altruists might have a high payoff. As the results in~\cite{lanchier} show (see their Figure~2), there are parameter regions -- in particular in a spatial Prisoner's dilemma -- where altruists can outcompete selfish strategies. 

Clearly, the cVMBC is a much simpler model than the death-birth updating process. This is resembled in the results, since \cite{lanchier} show parameter combinations with coexistence for the death-birth updating process, but our results never show coexistence for the cVMBC. However, as in our findings for the cVMBC, \cite{lanchier} find that types unfavorable in a non-spatial context can indeed win in all dimensions. Unfortunately, they can only give bounds on the phase transition in their model, while we have seen that $\alpha = \gamma$ is a sharp threshold, at least in one dimension.

\section{Preliminaries}
Here we provide some useful results for the proofs of our theorems. In
particular, we provide a comparison with a biased voter model in
Section~\ref{sec:comp} and a particular jump process in
Section~\ref{sec:jump}. 

\subsection{Comparison results}
\label{sec:comp}
In cases where $\alpha>\gamma$, it is possible to prove a stochastic
domination of the VMBC by a biased voter model. The precise statements
will be given below. But first, we define this process, which was
introduced by Williams and Bjerknes in \cite{williams} and first studied by Bramson and Griffeath in \cite{bramson}.

\begin{definition}[Biased Voter Model\label{def:biasedvoter}]
  The biased voter model with bias $\beta\geq -1$ and $\delta\geq -1$
  is a spin system $\widetilde X$ with state space
  $\{0,1\}^V$ and transition rates as follows:\\
  If $\widetilde X(u)=0$, then
  \begin{align*} 
    \widetilde c(u,\widetilde X) & =(1+\beta)\sum_{v}
    a(v,u) X(v). 
    \intertext{If $\widetilde X(u)=1$, then} 
	 \widetilde c(u,\widetilde X) & = (1+\delta)\sum_{v}
    a(v,u)(1-X(v))
  \end{align*}  
\end{definition}

\begin{remark}[Long-time behavior of the biased voter
  model\label{rem:biasvoter}]
  The long-time behavior of the biased voter model is quite simple.
  In \cite{bramson}, the limit behavior of the biased voter model in
  $V=\mathbb Z^d$ with nearest neighbor interactions is studied.
  Generalizations to the case of $d-$regular trees for $d\geq 3$
  can be found in \cite{louidor}. We restate the results for $V=\mathbb Z^d$:\\
  Let $\widetilde{X}$ be a biased voter model with bias $\beta>-1$ and
  $\delta>-1$ as introduced in Definition~\ref{def:biasedvoter}. For
  any configuration $X_0\in\binary^{\mathbb{Z}^d}$ with infinitely
  particles of each type it holds that the type with less bias dies out, i.e.:
  \begin{enumerate}
  \item If $\beta > \delta$, type 0 dies out (i.e.\
    $P(\lim_{t\rightarrow\infty} \widetilde{X}_t(u) = 1)=1$ for all $u\in V$).
  \item If $\delta > \beta$, type 1 dies out (i.e.\
    $P(\lim_{t\rightarrow\infty} \widetilde{X}_t(u) = 0)=1$ for all $u\in V$).
  \end{enumerate}
\end{remark}

\begin{lemma}[cVMBC$\leq$biased voter model\label{l:dominance}]
  Let $X$ be a cVMBC with bias $\alpha$ and cooperation coefficient
  $\gamma$ and $\widetilde X$ a biased voter model with bias $\gamma$ and
  $\alpha$. Then, if $b(.,(.,.))$ satisfies $\sum_u b(u,(v,w))\leq
  a(v,w)$, and $X_0\leq \widetilde X_0$, it is possible to couple $X$ and
  $\widetilde X$ such that $X_t\leq \widetilde X_t$ for all $t\geq 0$.
\end{lemma}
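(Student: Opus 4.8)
The plan is to realise $X$ and $\widetilde X$ on a common probability space through the basic coupling of two spin systems and to check that this coupling preserves the partial order. That is, I would build a Markov process $(X_t,\widetilde X_t)_{t\ge 0}$ on $\binary^V\times\binary^V$ whose marginals are the cVMBC with parameters $(\alpha,\gamma)$ and the biased voter model of Definition~\ref{def:biasedvoter} with $(\beta,\delta)=(\gamma,\alpha)$, in which at every site $u$: every $0\to 1$ flip of the first coordinate is carried out simultaneously with a $0\to 1$ flip of the second coordinate; every $1\to 0$ flip of the second coordinate is carried out simultaneously with a $1\to 0$ flip of the first coordinate; and at the sites where the two coordinates disagree (necessarily with the first equal to $0$ and the second equal to $1$, as long as the configurations are ordered) they flip independently. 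These are exactly the joint moves that cannot turn an ordered pair $x\le y$ into an unordered one, so once the prescription is shown to be compatible with the prescribed marginal rates, the coupled process started from $X_0\le\widetilde X_0$ stays in $\{(x,y):x\le y\}$ forever, which is the claim. Existence and uniqueness of the coupled process follow from the same bounded-rate argument as in Proposition~\ref{prop:existence} (the coupled rates are dominated by sums of cVMBC and biased-voter rates), cf.\ also \cite[Chapter~III]{liggett}.

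Compatibility of the prescription with the marginal rates reduces to two pointwise inequalities, required for every ordered pair $x\le y$ in $\binary^V$:
\begin{align*}
  x(u)=y(u)=0 &\ \Longrightarrow\ c(u,x)\le \widetilde c(u,y),\\
  x(u)=y(u)=1 &\ \Longrightarrow\ c(u,x)\ge \widetilde c(u,y),
\end{align*}
where $c$ denotes the cVMBC rate from \eqref{eq:104}--\eqref{eq:105} and $\widetilde c$ the biased voter rate from Definition~\ref{def:biasedvoter} with $(\beta,\delta)=(\gamma,\alpha)$. For the first implication I would start from \eqref{eq:104}, use the hypothesis in the form $\sum_w x(w)\,b(w,(v,u))\le\sum_w b(w,(v,u))\le a(v,u)$ to bound the cooperation sum, and then apply $x\le y$ coordinatewise to replace the surviving factors $x(v)$ by $y(v)$; this makes $c(u,x)$ at most $(1+\gamma)\sum_v a(v,u)y(v)=\widetilde c(u,y)$. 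For the second implication I would observe from \eqref{eq:105} that the cooperation term is nonnegative and discard it, and then use $1-x(v)\ge 1-y(v)$ to get $c(u,x)\ge(1+\alpha)\sum_v a(v,u)(1-y(v))=\widetilde c(u,y)$.

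I do not expect a genuine obstacle; the real content of the lemma is the observation that fixes which biased voter model to compare with. When the flipped site currently carries a defector, the cooperation mechanism can only assist a type-$1$ neighbour in invading it, i.e.\ it pushes the first coordinate \emph{upward} --- the dangerous direction for the ordering --- and the hypothesis $\sum_u b(u,(v,w))\le a(v,w)$ is precisely what allows this extra rate to be folded into the boosted reproduction rate $1+\gamma$ of the comparison process. When the flipped site carries a cooperator, the cooperation mechanism instead drives the first coordinate toward $0$, which only helps maintain $X\le\widetilde X$, so it may be dropped, and the defector bias $\alpha$ transfers unchanged to the parameter $\delta$ of the biased voter model. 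The only point that needs care in the write-up is bookkeeping --- keeping track of which of $x$ and $y$ enters each side of the two inequalities, and applying the comparison hypothesis to $b$ with its arguments in the same order as they occur in \eqref{eq:104}.
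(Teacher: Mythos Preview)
Your proposal is correct and follows essentially the same route as the paper: both reduce the coupling to the two rate inequalities $c(u,x)\le\widetilde c(u,y)$ when $x(u)=y(u)=0$ and $c(u,x)\ge\widetilde c(u,y)$ when $x(u)=y(u)=1$ (invoking \cite[Theorem~III.1.5]{liggett}), and verify them by the same two moves---bounding the cooperation sum via $\sum_w b(w,(v,u))\le a(v,u)$ in the first case and dropping the nonnegative cooperation term in the second. Your write-up is more explicit about the coupling mechanism, but the mathematical content is identical.
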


\begin{proof}
  We need to show (see \cite[Theorem 3.1.5]{liggett}) that for $X\leq \widetilde X$
  \begin{equation}
    \label{eq:303}
    \begin{aligned} 
      & \text{if } X(u)=\widetilde{X}(u)=0, &\text{then } c(u,X)\leq \widetilde{c}(u,\widetilde{X}),\\
      & \text{if } X(u)=\widetilde{X}(u)=1, &\text{then } c(x,X)\geq
      \widetilde{c}(u,\widetilde{X}).
    \end{aligned} 
  \end{equation}
  We start with the first assertion and write
  \begin{align*}
    c(u,X)& =\sum_{v} a(v,u) X(v)+\gamma \sum_{v}
    X(v) \sum_{w} X(w) b(w,(v,u)) \\ & \leq
    \sum_{v} a(v,u) X(v) + \gamma \sum_{v} X(v)
    a(v,u) \\ & \leq (1+\gamma)\sum_{v} a(v,u) \widetilde{X} (v)
    = \widetilde c(u,\widetilde X),\\
    \intertext{and for the second inequality we have} c(u,X) & =
    \left(1+\alpha\right) \sum_{v} a(v,u) (1-X(v)) +\gamma
    \sum_{v} (1-X(v)) \sum_{w,v} X(w) b(w,(v,u)) \\
    & \geq (1+\alpha)\sum_{v} a(v,u) (1-X(v)) \geq
    (1+\alpha)\sum_{v} a(v,u) (1-\widetilde{X}(v)) = \widetilde
    c(u,\widetilde X).
  \end{align*}
  This finishes the proof.
\end{proof}

\noindent
Next, we focus on the aVMBC in the case $V=\mathbb Z^d$ and the
symmetric, nearest-neighbor random walk kernel.

\begin{lemma}[aVMBC$\leq$biased voter model\label{l:44}]
  Let $V=\mathbb Z^d$, $a(.,.)$ be the nearest-neighbor random walk
  kernel defined in equation \eqref{eq:avmbc}, $X$ be an aVMBC with bias $\alpha$ and cooperation
  coefficient $\gamma$ and $\widetilde X$ a biased voter model with bias
  $\gamma(2d-1)/(2d)$ and $\alpha + \gamma/(2d)$. Then, if $X_0\leq
  \widetilde X_0$, it is possible to couple $X$ and $\widetilde X$ such that
  $X_t\leq \widetilde X_t$ for all $t\geq 0$.
\end{lemma}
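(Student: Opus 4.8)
The plan is to verify the coupling criterion of \cite[Theorem 3.1.5]{liggett} exactly as in the proof of Lemma~\ref{l:dominance}: assuming $X\le\widetilde X$, one checks that $c(u,X)\le\widetilde c(u,\widetilde X)$ whenever $X(u)=\widetilde X(u)=0$, and $c(u,X)\ge\widetilde c(u,\widetilde X)$ whenever $X(u)=\widetilde X(u)=1$. Since we deal with the aVMBC, $b(w,(v,u))=a(w,v)a(v,u)$, so both cooperation sums in \eqref{eq:104}--\eqref{eq:105} factor as $\sum_v(\,\cdot\,)\,a(v,u)\bigl(\sum_w X(w)a(w,v)\bigr)$, and the whole argument reduces to estimating the inner average $\sum_w X(w)a(w,v)=\tfrac1{2d}\sum_{w\sim v}X(w)$. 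I would also note at the outset that the biased voter model with the stated parameters is well defined, since $\alpha,\gamma\ge0$ forces both biases to be $\ge0\ge-1$.

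For $X(u)=\widetilde X(u)=0$ I would write $c(u,X)=\sum_v a(v,u)X(v)\bigl(1+\gamma\sum_w X(w)a(w,v)\bigr)$. The crude bound $\sum_w X(w)a(w,v)\le1$ only recovers the estimate of Lemma~\ref{l:dominance}; the improvement is that a term contributes only if $X(v)=1$ and $v\sim u$, in which case $u$ is one of the $2d$ neighbours of $v$ and carries $X(u)=0$, so $\sum_w X(w)a(w,v)\le\tfrac{2d-1}{2d}$. This yields $c(u,X)\le\bigl(1+\gamma\tfrac{2d-1}{2d}\bigr)\sum_v a(v,u)X(v)\le\bigl(1+\gamma\tfrac{2d-1}{2d}\bigr)\sum_v a(v,u)\widetilde X(v)=\widetilde c(u,\widetilde X)$, using $X\le\widetilde X$ and the first bias $\beta=\gamma(2d-1)/(2d)$.

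Symmetrically, for $X(u)=\widetilde X(u)=1$ I would keep the full expression $c(u,X)=(1+\alpha)\sum_v a(v,u)(1-X(v))+\gamma\sum_v(1-X(v))a(v,u)\bigl(\sum_w X(w)a(w,v)\bigr)$ and note that a term of the cooperation sum contributes only if $X(v)=0$ and $v\sim u$, in which case $u$ is a neighbour of $v$ with $X(u)=1$, so $\sum_w X(w)a(w,v)\ge\tfrac1{2d}$. Hence the cooperation sum is at least $\tfrac{\gamma}{2d}\sum_v a(v,u)(1-X(v))$, giving $c(u,X)\ge\bigl(1+\alpha+\tfrac{\gamma}{2d}\bigr)\sum_v a(v,u)(1-X(v))\ge\bigl(1+\alpha+\tfrac{\gamma}{2d}\bigr)\sum_v a(v,u)(1-\widetilde X(v))=\widetilde c(u,\widetilde X)$, now using the monotonicity $1-X(v)\ge1-\widetilde X(v)$ and the second bias $\delta=\alpha+\gamma/(2d)$.

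I do not expect a genuine obstacle: once the product kernel is substituted the computation is short. The one substantive point — and the reason the parameters sharpen from $(\gamma,\alpha)$ for the cVMBC to $(\gamma(2d-1)/(2d),\,\alpha+\gamma/(2d))$ here — is the elementary boundary-neighbour observation that a site $v$ adjacent to $u$ always has $u$ among its $2d$ neighbours, so knowing the type at $u$ pins one coordinate of the average $\tfrac1{2d}\sum_{w\sim v}X(w)$, moving it off the trivial bounds $0$ and $1$ by exactly $1/(2d)$.
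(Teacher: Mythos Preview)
Your proposal is correct and follows essentially the same approach as the paper: verify the two inequalities of \cite[Theorem 3.1.5]{liggett} by exploiting that, for a nearest-neighbour $v$ of $u$, the site $u$ itself is one of the $2d$ neighbours of $v$, so knowing $X(u)$ bounds $\sum_w X(w)a(w,v)$ by $(2d-1)/(2d)$ from above (when $X(u)=0$) and by $1/(2d)$ from below (when $X(u)=1$). The paper writes these bounds as $\sum_{w\neq u}a(v,w)$ and $a(u,v)a(v,u)$ rather than factoring out as you do, but the argument is the same.
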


\begin{proof}
  Again, we need to show that for $X\leq \widetilde X$ the inequalities in~\eqref{eq:303}
  hold. We start with the first assertion and write by using that $X(u)=0$
  \begin{align*}
    c(u,X)& =\sum_{v} a(v,u) X(v) +\gamma \sum_{v}
    X(v) \sum_{w} X(w) a(v,w)a(v,u) \\ & \leq
    \sum_{v} a(v,u) X(v) + \gamma \sum_{v} X(v)
    a(v,u) \sum_{w\neq u} a(v,w)\\ & \leq
    \Big(1+\gamma\frac{2d-1}{2d}\Big)\sum_{v} a(v,u) \widetilde{X} (v)
    = \widetilde c(u,\widetilde X),\\
    \intertext{and for the second inequality, now using $X(u)=1$ we have} c(u,X) & =
    \left(1+\alpha\right) \sum_{v} a(v,u) (1-X(v)) + \gamma
    \sum_{v} (1-X(v)) \sum_{w} X(w) a(w,v)a(v,u) \\
    & \geq (1+\alpha) \sum_{v} a(v,u) (1-X(v)) +\gamma
    \sum_{v} (1-X(v)) a(u,v)a(v,u) \\
    & = \Big(1+\alpha + \frac{\gamma}{2d}\Big)\sum_{v}
    a(v,u) (1-X(v)) \\ &  \geq \Big(1+\alpha +
    \frac{\gamma}{2d}\Big)\sum_{v} a(v,u) (1-\widetilde{X}(v)) =
    \widetilde c(u,\widetilde X).
  \end{align*}
  This yields the statement.
\end{proof}

\subsection{A result on a jump process}
\label{sec:jump}
In the proof of Theorem~\ref{thm:ext_surv}, we will use the dynamics
of the size of a cluster of cooperators and rely on a comparison of
this clustersize process with a certain jump process (which jumps
downward by at most one and upwards by at most two). The following
Proposition will be needed.

\begin{proposition}[A jump process\label{P:supermartingal}]
  Let
  $(\mu(t))_{t\geq 0}, (\lambda_1(t))_{t\geq 0}, (\lambda_2(t))_{t\geq
    0}$
  be $\mathbb R_+$-valued \cadlag -stochastic processes, adapted to
  some filtration $(\mathcal F_t)_{t\geq 0}$, which satisfy
  \begin{equation}
    \label{eq:812}
    \begin{aligned}
      \lambda_1(t) + 2\lambda_2(t)- \mu(t) & > \varepsilon>0 \text{ for some } \varepsilon \text{ and}\\
      \lambda_1(t) + \lambda_2(t) + \mu(t) & < C \text{ for some } C >0.
    \end{aligned}
  \end{equation}
  In addition, let $(C_t)_{t\geq 0}$ be a $\mathbb Z$-valued
  $(\mathcal F_t)_{t\geq 0}$-Markov-jump-process, which
  jumps at time $t$ from $x$ to
  \begin{align*}
    x-1 & \text{ at rate $\mu(t)$},\\
    x+1 & \text{ at rate $\lambda_1(t)$},\\
    x+2 & \text{ at rate $\lambda_2(t)$}
  \end{align*}
  Then, 
  \begin{enumerate}
  \item $C_t \xrightarrow{t\to\infty} \infty$ almost surely and
  \item $P(T_1=\infty)>0$, for $C_0=2$ and $T_1:=\inf\{t: C_t=1\}$.
  \end{enumerate}
\end{proposition}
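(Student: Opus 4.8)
The plan is to build a single exponential supermartingale for $(C_t)_{t\ge0}$ whose base works uniformly for all the (random, time‑varying) rates, and to read both statements off it. \emph{Step 1 (a supermartingale).} Set $q:=1-\varepsilon/(3C)$; then $q\in(0,1)$, since $\varepsilon<\lambda_1(t)+2\lambda_2(t)<2C$ by \eqref{eq:812}. Applying the (time‑inhomogeneous) generator of $C$ to $x\mapsto q^{x}$ gives, for all $t$ and all $x\in\mathbb Z$,
\[
 \mu(t)\big(q^{x-1}-q^{x}\big)+\lambda_1(t)\big(q^{x+1}-q^{x}\big)+\lambda_2(t)\big(q^{x+2}-q^{x}\big)=q^{x-1}(1-q)\,\psi_t(q),
\]
where $\psi_t(\theta):=\mu(t)-(\lambda_1(t)+\lambda_2(t))\theta-\lambda_2(t)\theta^{2}$. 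Writing $\theta=1-s$ with $s=\varepsilon/(3C)$ and expanding gives $\psi_t(1-s)=\big(\mu(t)-\lambda_1(t)-2\lambda_2(t)\big)+s\big(\lambda_1(t)+3\lambda_2(t)\big)-\lambda_2(t)s^{2}$; the first term is $<-\varepsilon$ by the drift bound in \eqref{eq:812}, while $s\big(\lambda_1(t)+3\lambda_2(t)\big)\le 3s\big(\lambda_1(t)+\lambda_2(t)\big)<3sC=\varepsilon$ by the rate bound, so $\psi_t(q)<0$. Hence the generator applied to $q^{\bullet}$ is strictly negative at every state and time; since the total jump rate is $<C$ the process does not explode, and by Dynkin's formula $(q^{C_t})_{t\ge0}$ is a supermartingale.

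\emph{Step 2 (proof of (2)).} Start from $C_0=2$ and let $T_1=\inf\{t:C_t=1\}$. As the only downward jump has size $1$, we have $C_{t\wedge T_1}\ge2$ on $\{t<T_1\}$, whereas $C_{T_1}=1$ on $\{T_1<\infty\}$; in particular $q^{C_{t\wedge T_1}}\le q$ is bounded. Optional stopping for the supermartingale $(q^{C_t})_t$ at the bounded time $t\wedge T_1$ gives $q^{2}=q^{C_0}\ge\mathbb E\big[q^{C_{t\wedge T_1}}\big]\ge q\,\mathbb P(T_1\le t)$, so $\mathbb P(T_1\le t)\le q$ for every $t$, and letting $t\to\infty$ yields $\mathbb P(T_1=\infty)\ge1-q=\varepsilon/(3C)>0$.

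\emph{Step 3 (proof of (1)).} Consider the compensated jump process $M_t:=C_t-C_0-\int_0^{t}\big(\lambda_1(s)+2\lambda_2(s)-\mu(s)\big)\,ds$, a square‑integrable martingale (the number of jumps in $[0,t]$ is dominated by a Poisson variable) with predictable quadratic variation $\langle M\rangle_t=\int_0^{t}\big(\mu(s)+\lambda_1(s)+4\lambda_2(s)\big)\,ds$. By \eqref{eq:812} the integrand lies in $(\varepsilon,4C)$, so $\varepsilon t\le\langle M\rangle_t\le4Ct$; in particular $\langle M\rangle_\infty=\infty$, and the strong law of large numbers for martingales gives $M_t/\langle M\rangle_t\to0$, hence $M_t/t\to0$ a.s. Then $C_t/t=C_0/t+M_t/t+t^{-1}\int_0^{t}(\lambda_1+2\lambda_2-\mu)\,ds\ge C_0/t+M_t/t+\varepsilon\to\varepsilon$ a.s., so $C_t\to\infty$ a.s. (Alternatively, (1) follows from Step 1 alone: the same estimate shows that from any level $l$, started at any time, the process reaches $l-1$ with probability at most $q$; iterating with the strong Markov property shows $\inf_t C_t>-\infty$ a.s.\ and that no finite level can equal $\liminf_t C_t$.)

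The one genuinely delicate point is Step 1 — recognising that what governs recurrence is not the drift $\lambda_1+2\lambda_2-\mu$ by itself, but the per‑jump drift $(\lambda_1+2\lambda_2-\mu)/(\mu+\lambda_1+\lambda_2)$, which the \emph{two} bounds in \eqref{eq:812} together keep above $\varepsilon/C$. This is exactly what forces the relevant root of $\psi_t$ a fixed distance below $1$ and lets the single base $q$ serve for all $t$ simultaneously; the remaining ingredients (optional stopping, the martingale law of large numbers, non‑explosion) are routine.
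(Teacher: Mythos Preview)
Your proof is correct; the core idea---the exponential supermartingale $q^{C_t}$---is the same as the paper's, but the execution differs in instructive ways.

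For part~(2) both you and the paper apply optional stopping to this supermartingale. The paper argues by contradiction (if $T_1<\infty$ a.s.\ then $e^{-2a}\ge e^{-a}$), while you extract the explicit bound $P(T_1=\infty)\ge 1-q=\varepsilon/(3C)$. This is a strict improvement: your bound is uniform in the initial data and could be plugged back into the Borel--Cantelli step in the proof of Theorem~1(ii).

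For part~(1) the routes diverge more. The paper first normalises the rates to $\lambda_1+\lambda_2+\mu\equiv1$ by a time change (using both inequalities in \eqref{eq:812}), then applies supermartingale convergence to $e^{-aC_t}$ and argues that the limit must be~$0$ because the process keeps jumping. You instead keep the original time scale, form the compensated martingale $M_t=C_t-C_0-\int_0^t(\lambda_1+2\lambda_2-\mu)\,ds$, bound its bracket between $\varepsilon t$ and $4Ct$, and invoke the martingale law of large numbers to get $\liminf C_t/t\ge\varepsilon$. Your argument is more quantitative (it gives a linear growth rate, not merely divergence) and avoids the time change entirely; the paper's is slightly shorter once the normalisation is in place, and uses nothing beyond the supermartingale already built for part~(2).

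One small remark: your parenthetical alternative for~(1) (``iterating with the strong Markov property\ldots'') is correct in spirit but sketched loosely; since the main LLN argument is complete it is not needed.
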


\begin{proof}
  In the case of time-homogeneous rates, i.e.\ constant
  $\mu, \lambda_1$ and $\lambda_2$, the assertion is an immediate
  consequence of the law of large numbers. We prove the general case
  by using martingale theory. We assume without loss of generality
  that $\lambda_1(t)+\lambda_2(t)+\mu(t)=1$ for all $t\geq
  0$.
  (Otherwise, we use a time-rescaling. Note that this rescaling is
  bounded by assumption~\eqref{eq:812} and therefore,
  $C_t \xrightarrow{t\to\infty} \infty$ holds iff it holds for the
  rescaled process.)

  We first show that there exists $a_c>0$ such that for all
  $a\in (0,a_c)$, the process $(\exp(-a C_t))_{t\geq 0}$ is a positive
  $(\mathcal F_t)_{t\geq 0}$-super-martingale. For this, consider the
  (time-dependent) generator of the process $(C_t)_{t\geq 0}$ applied
  to the function $f(x)=\exp(-ax)$ which yields at time $t$
  \begin{equation*}
    \begin{aligned}
      (G_t^{\mathcal{C}}f)(x)&=\lambda_1(t)\exp(-a(x+1))+\lambda_2(t)\exp(-a(x+2))
      \\ & \qquad \qquad \qquad \qquad \qquad \qquad +\mu(t)\exp(-a(x-1))-\exp(-ax)\\
      &=\exp(-ax)(\lambda_1(t)\exp(-a)+\lambda_2(t)\exp(-2a)+\mu(t)\exp(a)-1)
      \\ & = \exp(-ax) g_t(a)
    \end{aligned}
  \end{equation*}
  for
  $g_t(a):=\lambda_1(t)\exp(-a)+\lambda_2(t)\exp(-2a)+\mu(t)\exp(a)-1$. Noting
  that for all $t$, we have that $g_t(0)=0$ and 
  $$\frac{\partial g_t}{\partial a}(0) = - \lambda_1(t) - 2\lambda_2(t) + \mu(t) < - \varepsilon$$
  by \eqref{eq:812}, we find $a_c>0$ such that $g_t(a)<0$ for all
  $0<a<a_c$ and all $t\geq 0$, which means that
  $(\exp(-a C_t))_{t\geq 0}$ is an
  $(\mathcal F_t)_{t\geq 0}$-super-martingale. By the martingale
  convergence theorem, it converges almost surely and -- since the sum
  of rates is bounded away from~$0$ -- the only possible almost sure
  limit is~0. Now, 1.\ follows since
  $C_t\xrightarrow{t\to\infty} \infty$ if and only if
  $\exp(-aC_t)\xrightarrow{t\to\infty} 0$ for some $a>0$.  For 2., the
  process $(\exp(-aC_{t\wedge T_1}))_{t\geq 0}$ is a non-negative
  supermartingale by Optional Stopping. Let us assume that
  $T_1<\infty$ almost surely, which occurs if and only if
  $C_{t\wedge T_1}\xrightarrow{t\to\infty} 1$ almost surely. Then,
  using the Optional Stopping Theorem, we obtain with $C_0=2$
  \begin{align*}
    \exp(-2a)=E[\exp(-a C_0)]&\geq \lim_{t\rightarrow\infty}
    E[\exp(-a C_{t\wedge T_1})]\\
    &= E[\lim_{t\rightarrow\infty}
    \exp(-a C_{t\wedge T_1})]=\exp(-a),
  \end{align*}
  a contradiction since $a>0$. Thus, we have that $P(T_1=\infty)>0$.
\end{proof}

\section{Proofs}
Here, we will show our main results.

\subsection{Proof of Theorem~\ref{thm:ext_surv}}
\label{sec:proof1}
For (i), we have $\alpha>\gamma$. The assertion is a consequence of
the coupling with the biased voter model from
Lemma \ref{l:dominance} (with bias $\gamma$ and $\alpha$). Since the
biased voter model dominates the cVMBC and type~1 dies out in the
biased voter model (\ref{rem:biasvoter}), the same is true for the
cVMBC.

The proof of (ii) is more involved. We have to show that cooperators
survive almost surely when started in a non-trivial translation
invariant configuration. Therefore, we analyze an arbitrary cluster of
cooperators and show that the size of such a cluster has a positive
probability to diverge off to infinity. Note that the flanking regions
of a cluster of cooperators can have three different forms:
\begin{align*}
  & \enskip \text{Case A \qquad \qquad \quad Case B \qquad \qquad \qquad \quad \enskip Case C} \notag\\
  &00\!\!\!\!\!\!\!\!\!\!\!\!\!\!\!\!\!\!\underbrace{1...1}_{\text{cluster
      of
      cooperators}}\!\!\!\!\!\!\!\!\!\!\!\!\!\!\!\!\!\!00,\qquad\qquad
  10\!\!\!\!\!\!\!\!\!\!\!\!\!\!\!\!\!\!\underbrace{1...1}_{\text{cluster
      of
      cooperators}}\!\!\!\!\!\!\!\!\!\!\!\!\!\!\!\!\!\!01,\qquad\qquad
  00\!\!\!\!\!\!\!\!\!\!\!\!\!\!\!\!\!\!\!\!\!\!\!\!\!\!\!\!\!\!\!\!\!\!\!\underbrace{1...1}_{\hspace*{2cm}\text{cluster
      of cooperators}}\!\!\!\!\!\!\!\!\!\!\!\!\!\!\!\!\!\!\!\!\!\!\!\!\!\!\!\!\!\!\!\!\!\!\!01 \text{ or }
  10\!\underbrace{1...1}_{\mbox{}}\!00
\end{align*}
These are the only possible environments a cluster of cooperators can
encounter in one dimension. Note that a cluster can also only consist
of a single cooperator. The dynamics of the cluster size depends on
the environment. Precisely, by the dynamics of the process, we obtain
the following. A cluster of size $x>1$
\begin{equation}
  \label{eq:511}
  \begin{aligned}
    \text{in case A } & \text{jumps to $y=x+1$ at rate $1+\gamma$}\\
    & \text{jumps to $y=x-1$ at rate $1+\alpha$}\\
    \text{in case B } & \text{jumps to $y\geq x+2$ at rate at least $2+\gamma$}\\
    & \text{jumps to $y=x-1$ at rate $1+\alpha+\gamma$}\\
    \text{in case C } & \text{jumps to $y\geq x+2$ at rate at least $1+\tfrac{\gamma}{2}$}\\
    & \text{jumps to $y=x+1$ at rate $\tfrac{1+\gamma}{2}$}\\
    & \text{jumps to $y=x-1$ at rate $1+\alpha+\tfrac{\gamma}2$}.
  \end{aligned}
\end{equation}
Under the assumptions of Theorem~\ref{thm:ext_surv}, let $(V_t)_{t\geq
  0}$ be a stochastic process representing the cluster of cooperators which is closest to the
origin and contains at least two cooperators. (If there is no such
cluster at time 0, wait for some time $\varepsilon>0$ and pick the
cluster then.) We will show that
\begin{align}
  \label{eq:510}
  P(V_t \uparrow \mathbb Z)>0.
\end{align}
For this, we compare $|V| = (|V_t|)_{t\geq 0}$ with a jump process
$(\widetilde V_t)_{t\geq 0}$ as in Corollary~\ref{cor:2}, where the jump rates at times $t$ are given as follows:
\begin{equation*}
  \begin{aligned}
    \text{in case A } & & \lambda_1(t)&=1+\gamma,& & \lambda_2(t)=0,& &  \mu(t)=1+\alpha; \\
    \text{in case B } & & \lambda_1(t)&=0, & & \lambda_2(t) = 2+\gamma,& &  \mu(t) = 1+\alpha+\gamma; \\
    \text{in case C } & & \lambda_1(t)&= \frac{1+\gamma}{2},& & \lambda_2(t) = 1 + \frac{\gamma}{2},& &  \mu(t) = 1+\alpha+\frac{\gamma}{2}.
  \end{aligned}
\end{equation*}
Moreover, this process is stopped when reaching~1. By the
comparison in \eqref{eq:511}, we see that we can couple $|V|$ and
$\widetilde V$ such that $ \widetilde V\leq |V|$, at least until
$\widetilde V$ reaches~1. Since the jump rates of $\widetilde V$ indeed satisfy $2\lambda_2(t) + \lambda_1(t) - \mu(t) > \varepsilon >0$ for all times $t\geq 0$ we find $\widetilde
V_t\xrightarrow{t\to\infty}\infty$ with positive probability which implies that $P(|V_t| \xrightarrow{t\to\infty} \infty)>0$ holds as well.
Still, we need to make sure that the cluster does not wander to $\pm
\infty$. For this, consider both boundaries of the cluster if it has
grown to a large extent. The right boundary is again bounded from
below by a jump process of the form as in Corollary~\ref{cor:2} with
$\lambda_1(t) = \frac{1+\gamma}{2}, 0;\ \lambda_2(t) = 0, 1+\frac{\gamma}{2}$ and $\mu(t)=\frac{1+\alpha}{2}, \frac{1+\alpha+\gamma}{2}$ for the cases A and B (note that the right boundary alone of case C is already captured by the right boundaries of the cases A and B). So, again, we see from
Corollary~\ref{cor:2} that the right border of the cluster goes to
infinity with positive probability. The same holds for the left
border of the cluster which tends to $-\infty$. Therefore, we have shown \eqref{eq:510}.

Now we use \eqref{eq:510} to show that defectors indeed go extinct.
Note that, from the argument given above, the probability of survival
of a cluster of cooperators depends on the environment, but is bounded
away from~0 by some $p>0$. We start at time $0$ with a cluster of
cooperators which has at least probability $p$ to survive as proved
above. In case it survives we are done, otherwise it goes extinct in
finite time and has at most merged with finitely many other
cooperating clusters until then. Thus, at this extinction time we can
choose another cluster of cooperators which exists due to the
translation invariance of the starting configuration. This cluster
again has a probability of survival of at least $p$ independently of
the history of the interacting particle system. This allows for a
Borel-Cantelli-argument showing that when repeating these steps
arbitrarily often eventually one of the cooperating clusters
survives. This happens at the latest after a geometrically distributed
number of attempts and thus in finite time. Hence, we have
$P(\lim_{t\rightarrow\infty} X_t(u)=1)=1$ for all $u$ and we are done.

For (iii), in order to prove clustering in the case
$\alpha=\gamma > 0$, there are actually two proofs. One relies on the
dual lattice and the study of process of cluster interfaces, which
performs annihilating random walks. This technique would even show
clustering for all parameters $\alpha$ and $\gamma$. However, we show
clustering by studying the probability of finding a cluster edge in
the special case $\alpha=\gamma$. Clustering for the other parameter
configurations was already shown in (i) and (ii) since extinction also
implies clustering of the process.

For our method, we write
$p_t(i_0 \cdots i_k) := P(X_t(0)=i_0,\cdots X_t(k)=i_k)$ for
$i_0,...,i_k\in\{0,1\}$ and $k=0,1,2,...$. We have to show that
\begin{align}
  \label{eq:toshow0}
  p_t(10) \xrightarrow{t\to\infty} 0, \qquad
  p_t(01)\xrightarrow{t\to\infty} 0
\end{align}
since then -- by translation invariance -- every configuration
carrying both types has vanishing probability for $t\to\infty$.

We start with the dynamics of $p_t(1)$, which reads (recall that
$\alpha=\gamma$)
\begin{align*}
  \frac{\partial p_t(1)}{\partial t} &= \frac{1}{2}(p_t(10)+p_t(01))
  +\frac{\gamma}{2}(p_t(110)+p_t(011))\\
  &\qquad \qquad \qquad \qquad \qquad-
  \frac{1+\alpha}{2}(p_t(10) + p_t(01)) - \gamma p_t(101)\\
  &= -\frac{\alpha}{2}(p_t(10)+p_t(01)) +\frac{\gamma}{2}(p_t(10) + p_t(01) - 2p_t(010)) -\gamma p_t(101)\\
  &= -\gamma (p_t(101) + p_t(010)) \leq 0.
\end{align*}
Since $p_t(1)\in [0,1]$, this probability has to converge for $t\to\infty$, hence $\frac{\partial p_t(1)}{\partial t}
\xrightarrow{t\to\infty} 0$, and therefore
\begin{equation}\label{eq:singletons}
\begin{aligned}
  &p_t(101)\xrightarrow{t\to\infty} 0, \qquad
  p_t(010)\xrightarrow{t\to\infty} 0.
\end{aligned}
\end{equation}
Now, consider the dynamics of $p_t(11)$, which is
\begin{align*}
  \frac{\partial p_t(11)}{\partial t} &= p_t(101)
  +\frac{\gamma}{2}(p_t(1101)+p_t(1011)) \\ & \qquad \qquad \qquad
  \qquad -
  \frac{1+\alpha}{2}(p_t(110) + p_t(011)) - \frac \gamma 2 (p_t(1011) + p_t(1101))\\
  &= p_t(101) - \frac{1+\alpha}{2}(p_t(110)+p_t(011)).
\end{align*}
Since we know that $p_t(101) \xrightarrow{t\to\infty} 0$
by~\eqref{eq:singletons}, and because $p_t(11)\in [0,1]$, we also have
that
\begin{align*}
  &p_t(110)\xrightarrow{t\to\infty} 0, \qquad
  p_t(011)\xrightarrow{t\to\infty} 0.
\end{align*}
We now conclude with
\begin{align*}
  p_t(10) & = p_t(010) + p_t(110) \xrightarrow{t\to\infty} 0, \\ 
  p_t(01) & = p_t(010) + p_t(011) \xrightarrow{t\to\infty} 0,
\end{align*}
which shows~\eqref{eq:toshow0}. 
\qed

\subsection{Proof of Theorem~\ref{thm:ext_surv2}}
\label{sec:proof2}
(i) We use the comparison with the biased voter model from
Lemma~\ref{l:44}. Therefore, we have that $\alpha>\gamma(d-1)/d$ if
and only if $\alpha + \gamma/(2d) > \gamma(2d-1)/(2d)$. Since for this
choice of parameters type~1 goes extinct in the biased voter model
which dominates the aVMBC, we are done.
\\
(ii) For $d=1$ and the nearest neighbor random walk, the altruistic
mechanism is such that a configuration $01$ (or $10$) turns into $00$
at rate $\alpha/2 + \gamma/4$. The same holds for the cVMBC with selection rate $\alpha + \gamma/2$. In addition, $110$ (or $011$) turns to $111$ at rate
$\gamma/2$, which is the same as for the cVMBC with cooperation
parameter $\gamma$. This shows that the transition rates for the altruistic process $\widetilde{X}$ satisfy: \\
If $\widetilde{X}(u)=0$, then
\begin{align*} 
  c(u,\widetilde X)& =\frac{1}{2}\sum_{v:|v-u|=1} \widetilde X(v)+\frac{\gamma}{4} \sum_{v:|v-u|=1}
  \widetilde X(v) \sum_{\substack{w:|w-v|=1\\ w\neq u}} \widetilde X(w).  \intertext{If
    $\widetilde X(u)=1$, then} c(u,\widetilde X) & = \frac{1+\alpha+\gamma/2}{2}\sum_{v:|v-u|=1}
  (1-\widetilde X(v))+\frac{\gamma}{4} \sum_{v:|v-u|=1}
  (1-\widetilde X(v)) \sum_{\substack{w:|w-v|=1\\ w\neq u}}
  \widetilde X(w).
\end{align*}
These resemble the transition rates of a cVMBC with selection rate $\alpha+\frac{\gamma}{2}$ and cooperation rate $\frac{\gamma}{2}$, see also equations \eqref{eq:104} and \eqref{eq:105}. In particular, clustering in the case $\gamma>\alpha=0$ 
follows from Theorem~\ref{thm:ext_surv} (iii).
\qed

\subsection{Proof of Theorem \ref{thm:complete_conv}}
\label{sec:proof3}
At time $t$, let $N_t$ be the number of finite clusters in $X_t$ with
sizes $C_t^1,...,C_t^{N_t}$. If the process starts with finitely many
defectors (cooperators), $C_t^1, C_t^3,C_t^5,...$ are sizes of
clusters of defectors (cooperators), and $C_t^2, C_t^4,...$ are sizes
of clusters of cooperators (defectors). Note that $(N_t,
C_t^1,...,C_t^{N_t})_{t\geq 0}$ is a Markov process. We will show the
following:
\begin{enumerate}
\item Either, $N_t\xrightarrow{t\to\infty}0$ or
  $N_t\xrightarrow{t\to\infty}1$. 
\item In cases (ii) and (iii), $N_t\xrightarrow{t\to\infty}0$.
\item If $N_t\xrightarrow{t\to\infty}1$, then $C_t^1
  \xrightarrow{t\to\infty}\infty$.
\end{enumerate}
Note that 1.\ and 3.\ together imply (i), i.e.\ $X$ clusters in all
cases. Of course, 2.\ implies (ii) and (iii).

1. The process $N=(N_t)_{t\geq 0}$ is non-increasing and
bounded from below by~0, so convergence of $N$ is certain. We assume
that $N_0 = n\geq 3$. Note that $N_0$ is odd and remains so until it hits $1$ from where it may or may not jump to $0$. In order to prove the claim we show that the hitting time $\inf\{s:
N_s<n\}$ is finite almost surely. For this, it suffices to show that
\begin{align}
  \label{eq:Tfinite}
  T := \inf\{s: C_s^k=1 \text{ for some $1\leq k\leq N_s$}\}<\infty
\end{align}
almost surely, since by time $T$, some cluster has size~1 and there is
a positive chance that $N$ decreases at the next transition. If $N$
does not decrease, there is the next chance after another finite time
and eventually, $N$ will decrease.

If $\alpha\geq \gamma$, consider the size $C_t$ of a cluster of
cooperators. Before time $T$, all clusters have size at least~2, so
$C_t$ jumps
\begin{align*}
  &\text{ from $C$ to $C+1$ at rate $1 + \gamma$},\\
  &\text{ from $C$ to $C-1$ at rate $1 + \alpha$},
\end{align*}
hence $(C_{t\wedge T})_{t\geq 0}$ is dominated by a symmetric random
walk with jump rate $1+\alpha$, stopped when hitting~$1$, which
implies that $T<\infty$ almost surely due to the recurrence of the
symmetric random walk in one dimension. If $\gamma\geq\alpha$, the
same argument shows that $T<\infty$ if the role of cooperators and
defectors is exchanged. Hence we have proved \eqref{eq:Tfinite} and
1.\ is shown.

2. If $N_t\xrightarrow{t\to\infty}1$ and $\alpha\geq \gamma$, the
remaining finite cluster must consist of defectors (since the argument
used in 1. shows that a finite cluster of cooperators would die out).
Therefore, in case (ii), we must have that $N_t\xrightarrow{t\to\infty}0.$ If
$\gamma\geq \alpha$, the remaining finite cluster consists of cooperators
for the same reason. Hence, in (iii), we must have that
$N_t\xrightarrow{t\to\infty}0.$ Thus, we have shown 2. 

3. As just argued in 1.\ and 2.\, if $N_t\xrightarrow{t\to\infty}1$ the
remaining finite cluster must contain the stronger type, i.e.\
defectors for $\alpha > \gamma$ and cooperators for
$\gamma>\alpha$. The size of the remaining finite cluster therefore is
a biased random walk which goes to infinity on
$\{N_t \xrightarrow{t\to\infty}1\}$ and the result follows.  \qed

\subsubsection*{Acknowledgments}
We would like to thank two anonymous referees for thorough reading and useful comments which improved the manuscript.
This research was supported by the DFG through grant Pf-672/5-1.

\bibliographystyle{alpha}

\begin{thebibliography}{ELSS02}


\bibitem[AS12]{archetti}
M. Archetti and I. Scheuring.
\newblock {\em Review: Game theory of public goods in one-shot social dilemmas without assortment}.
\newblock Journal of Theoretical Biology, Vol. 299 (2012), 9--20.

\bibitem[BK11]{blath}
J. Blath and N. Kurt.
\newblock {\em Survival and extinction of caring double-branching annihilating random walk}.
\newblock Electron. Commun. Probab. 16 (2011), no. 26, 271--282.

\bibitem[BG81]{bramson}
M. Bramson and D. Griffeath.
\newblock {\em On the Williams-Bjerknes Tumour Growth Model I}.
\newblock Ann. Probab. 9 (1981), no. 2, 173--185.

\bibitem[BBRG08]{brockhurst}
M.~A. Brockhurst, A. Buckling, D. Racey and A. Gardner.
\newblock {\em Resource supply and the evolution of public-goods cooperation in bacteria}.
\newblock BMC Biology 6:20 (2008).

\bibitem[Clu09]{clutton}
T. Clutton-Brock.
\newblock {\em Cooperation between non-kin in animal societies}.
\newblock Nature 462 (2009), no. 7269, 51--57.

\bibitem[Cre01]{crespi}
B.~J. Crespi.
\newblock {\em The evolution of social behavior in microorganisms}.
\newblock Trends in Ecology \& Evolution, Vol. 16 (2001), no.4, 178--183.

\bibitem[Czu16]{dissertation}
P. Czuppon.
\newblock {\em Phenotypic heterogeneity in bacterial populations -- a mathematical study}.
\newblock University of Freiburg, Dissertation (2016), DOI: 10.6094/UNIFR/11207.

\bibitem[DNSWB14]{drescher}
K. Drescher, C.~D. Nadell, H.~A. Stone, N.~S. Wingreen and B.~L. Bassler.
\newblock {\em Solutions to the Public Good Dilemma in Bacterial Biofilms}.
\newblock Current Biology 24 (2014), no. 1, 50--55. 

\bibitem[EK86]{ethier}
S.~N. Ethier and T.~G. Kurtz.
\newblock {\em Markov processes. Characterization and convergence}.
\newblock Wiley Series in Probability and Mathematical Statistics: Probability
  and Mathematical Statistics. John Wiley \& Sons Inc., New York, 1986.

\bibitem[EL16]{lanchier}
S. Evilsizor and N. Lanchier.
\newblock {\em Evolutionary games on the lattice: death-birth updating process}.
\newblock Electron. J. Probab. 21 (2016), paper no. 17, 1--29.

\bibitem[GW03]{griffin}
A.~S. Griffin and S.~A. West.
\newblock {\em Kin Discrimination and the Benefit of Helping in Cooperatively Breeding Vertebrates}.
\newblock Science 302 (2003), no. 5645, 634--636.

\bibitem[HJM15+]{hutzenthaler}
M. Hutzenthaler, F. Jordan, D. Metzler.
\newblock{\em Altruistic defense traits in structured populations}.
\newblock arXiv:1505.02154, Mathematics - Probability, 2015.

\bibitem[Lig85]{liggett}
T.~M. Liggett.
\newblock {\em Interacting Particle Systems}.
\newblock Springer Berlin Heidelberg, 1985.

\bibitem[LTV14]{louidor}
O. Louidor, R. Tessler, A. Vandenberg-Rodes.
\newblock {\em The Williams–Bjerknes model on regular trees}.
\newblock Ann. Appl. Probab. 24 (2014), no. 5, 1889--1917.

\bibitem[Now06]{nowak}
M. Nowak. 
\newblock {\em Five Rules for the Evolution of Cooperation}.
\newblock Science 314 (2006), no. 5805, 1560--1563. 

\bibitem[PF10]{penn}
D.~J. Penn and J.~G. Frommen. 
\newblock {\em Kin recognition: an overview of conceptual issues, mechanisms and evolutionary theory}.
\newblock Animal Behaviour: Evolution and Mechanisms, Chapter 3. Springer Berlin Heidelberg, 2010.

\bibitem[SS15]{sturm}
A. Sturm and J.~M. Swart.
\newblock {\em A particle system with cooperative branching and coalescence}. 
\newblock Ann. Appl. Probab. 25 (2015), no. 3, 1616--1649. 

\bibitem[WB71]{williams}
T. Williams and R. Bjerknes.
\newblock {\em Stochastic model for abnormal clone spread through epithelial basal layer}.
\newblock Nature 236 (1972), no. 5340, 19--21.

\bibitem[WL06]{levin}
N.~S. Wingreen and S.~A. Levin.
\newblock {\em Cooperation among Microorganisms}.
\newblock PLoS Biology 4 (2006), no. 9, e299.

\end{thebibliography}

\end{document}